\newtheorem{theorem}{Theorem}[section]
\newtheorem*{theorem-non}{Theorem}
\newtheorem{lemma}[theorem]{Lemma}
\newtheorem{proposition}[theorem]{Proposition}
\newtheorem*{proposition-non}{Proposition}
\newtheorem{corollary}[theorem]{Corollary}
\newtheorem{definition}[theorem]{Definition}
\newtheorem*{conjecture-non}{Conjecture}
\theoremstyle{remark}
\newtheorem{questions}[theorem]{Questions}
\newtheorem{nota}[theorem]{}
\newtheorem{claim}{Claim}
\newtheorem{remark}[theorem]{Remark}
\def\cl{\overline}
\def\DD{\operatorname{D}}
\def\ND{\operatorname{ND}}
\def\NDif{\operatorname{NDif}}
\def\R{\mathbb R}
\def\E{\mathcal{E}}
\def\Z{\mathbb Z}
\def\N{\mathbb N}
\def\B{\mathcal B}
\def\O{o}
\def\C{\mathcal C}
\def\e{\varepsilon}
\def\norma{\|\cdot\|}
\def\nX{\|\cdot\|_X}
\def\XnX{(X,\|\cdot\|_X)}
\def\RRX{(\R^2,\|\cdot\|_X)}
\def\nY{\|\cdot\|_Y}
\def\YnY{(Y,\|\cdot\|_Y)}
\def\tauSXY{\tau:S_X\to S_Y}
\title{Linearity of isometries between convex Jordan curves}
\begin{document}

\author
{Javier~Cabello~Sánchez}
\address{
Departamento de Matem\'{a}ticas, Universidad de Extremadura, 
Avda. de Elvas s/n, 06006 Badajoz. Spain. {coco@unex.es}}

\address{\em Supported in part by DGICYT projects MTM2016-76958-C2-1-P 
and PID2019-103961GB-C21 (Spain) and Junta de Extremadura programs GR-15152 and IB-16056.} 

\address{\em Keywords: Tingley's Problem; differentiability; finite-dimensional spaces; 
metric invariants.}
\address{\em 2020 Mathematics Subject Classification: 46B04, 15A03, 52A10.}

\begin{abstract}
In this paper, we show that the $C^1$-differentiability of the norm of a 
two-dimensional normed space depends only on distances between points of the 
unit sphere in two different ways. 

As a consequence, we see that any isometry between the spheres of normed planes 
$\tau:S_X\to S_Y$ is linear, provided that there exist linearly independent 
$x,\overline{x}\in S_X$ where $S_X$ is not differentiable and that $S_X$ is piecewise 
differentiable. 

We end this work by showing that the isometry $\tau:C_X\to C_Y$ is linear even 
if it is not an isometry between spheres: every isometry between (planar) Jordan 
piecewise $C^1$-differentiable convex curves extends to $X$ whenever $X$ and $Y$ 
are strictly convex and the amount of non-differentiability points of $S_X$ and $S_Y$ 
is finite and greater than 2. 
\end{abstract}

\maketitle

\section{Introduction}

The study of isometries between Banach spaces led, back in the 30's, to one of 
the best known results in Functional Analysis, the Mazur--Ulam Theorem. This 
result, see~\cite{MazurUlam}, states that every onto isometry between two 
Banach spaces is affine. So, if an onto isometry preserves the origin, then 
the isometry is linear. Forty years later, P.~Mankiewicz (\cite{Mankiewicz}) 
proved that every onto isometry between convex bodies in two Banach spaces 
is also affine. The foreseeable generalisation of these results is 
{\em Every onto isometry between the spheres of two Banach spaces is linear,} 
and this could be ultimately generalised as 
{\em Every onto isometry between the boundaries of convex bodies 
of two Banach spaces is affine,}
but, up to now, no-one has been able to prove or disprove this statement. 
This innocent-looking problem was stated in 1987 by D.~Tingley (\cite{Tingley}), 
but it turns out to be way more challenging than it could seem at first glance. 
Tingley's Problem has evolved to that of extending isometries between spheres to 
isometries between the whole spaces, and the greatest advances have been achieved 
when both spaces have some common structure, such as von Neumann algebras, 
trace class operators spaces, sums of strictly convex spaces\ldots This 
{\em extension of isometries problem} has 
experienced a rapid development in the last few years, and there are lots of 
kinds of spaces where Tingley's Problem has a positive answer, see  
\cite{TBanakhC2, CP18, ding2013, 
fangwang2006, FPeralta2018vN, PeraltaNG, FPeralta2017, FPeralta2018Low, 
KadetsMiguel, li2016, 
Mori18, MoriOzawa19, Peralta2018Acta, Peralta2019RevMC, 
PeraTa2019, TanXiong, tanaka2014, Tanaka2016matr, 
Tanaka2017vN, wanghuang}. 

Nevertheless, there is another way to look at this Problem. Instead of 
extending an isometry, one can rule out the existence of an isometry between 
two spheres --a trivial example: there is no isometry between the spheres of 
$(\R^2,\norma_2)$ and $(\R^2,\norma_\infty)$, say $S_2$ and $S_\infty$, because 
there exist $x,y,z\in S_\infty$ 
such that $\|x-y\|_\infty=\|x-z\|_\infty=\|y-z\|_\infty=2$ but this cannot 
happen in $S_2$. To the best of our knowledge, 
the first great achievement in 
this setting can be found in~\cite{KadetsMiguel}, where the authors prove that, 
in finite-dimensional spaces,  
no sphere can be isometric to a polyhedral sphere unless it is polyhedral, too. 
Actually, they also extend the isometry between the spheres, so the main result 
in~\cite{KadetsMiguel} is 

{\em If $X$ is finite-dimensional and polyhedral and there is an onto isometry 
$\tauSXY$, then $Y$ is also polyhedral and $X$ and $Y$ are linearly isometric.}

In the same spirit, a few years later appeared this result: 

{\em If $X$ is an inner product space and there is an onto isometry $\tauSXY$, then 
$Y$ is also an inner product space and $X$ and $Y$ are linearly isometric,} 
see \cite{BCFP18, JCSRefJMAA, MoriOzawa19}. 

So, motivated by the huge advance that can be seen at a recent paper by Tarás 
Banakh, \cite{TBanakhC2}, whose main result is 

{\em Every isometry $\tauSXY$ between the spheres of 
absolutely smooth two-dimensional spaces is linear}, \\ 
we began to study whether the $C^2$-differentiability (that implies absolute 
differentiability) of some two-dimensional 
normed space $\XnX$ can be expressed in terms of $(S_X,\nX)$ --unsuccessfully. 

However, we have been able to determine $C^1$-differentiability in two independent 
ways. The first way is easily seen to hold in finite-dimensional spaces, whereas we 
have not been able to prove whether the second one works in dimensions higher than 2 or not. 

These two ways are the following: 
\begin{enumerate}
\item Given a finite-dimensional normed space $\XnX$, the norm $\nX$ fails to 
be differentiable at $x$ if and only if there exist $\alpha, \e_0>0$ such that 
for every $\e<\e_0$ there exist $u,v\in S_X\!\setminus\!\{\pm x\}$ such that
$$
\max\{\|u-x\|_X, \|v+x\|_X\}\leq\e,\qquad \|u-v\|_X\leq 2-\alpha\e.
$$
\item Given $x,y,z\in S_X$ such that $\|\cdot\|_X$ is differentiable at $z$ 
and $z-y=\lambda x$ for some $\lambda>0$, the norm $\|\cdot\|_X$ is 
differentiable at $x\in S_X$ if and only if 
$$G(t)=\|\gamma_z(t)-y\|$$ 
is differentiable at 0, where 
$\gamma_z:\R\to S_X$ is an arc-length parameterization such that $\gamma_z(0)=z$. 
\end{enumerate}

With these two facts in mind, it is not too difficult to show the Tingley-type 
result in this paper: 

\begin{theorem}\label{ElT}
Let $\XnX$ and $\YnY$ be two-dimensional normed spaces and let $\tauSXY$ be 
an isometry. If the amount of points in $S_X$ where $\nX$ is not 
differentiable is finite and greater than 3, then $\tau$ is linear. 
\end{theorem}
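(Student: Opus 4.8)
The plan is to combine the two characterisations of $C^1$-differentiability stated in the Introduction: the first one, being phrased purely through mutual distances in $S_X$, will show that $\tau$ matches up the non-differentiability points of $S_X$ and of $S_Y$; the second one, transported along $\tau$, will show that $\tau$ matches up the \emph{directions} of non-differentiability; and the hypothesis that there are more than three such points will then rigidify $\tau$ into the restriction of a linear map.

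First I would record some soft facts. Since $\tau$ is an onto isometry between the Jordan curves $S_X$ and $S_Y$, it is a homeomorphism; being chord-preserving it also preserves the intrinsic (arc-length) metric, because the length of a subarc is the supremum of $\sum_i\|p_{i-1}-p_i\|$ over finite sequences $p_0,\dots,p_n$ of its points taken in cyclic order, and $\tau$ respects both this order and each summand. Consequently $\tau$ carries arc-length parameterizations of $S_X$ to arc-length parameterizations of $S_Y$ (up to orientation), and, since $-x$ is characterised among points of $S_X$ as the one lying at intrinsic distance half of the perimeter from $x$ (a consequence of $-S_X=S_X$), one gets $\tau(-x)=-\tau(x)$. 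Also, $\tau$ maps maximal line segments of $S_X$ onto maximal line segments of $S_Y$ affinely, since a subarc of a convex curve is a segment exactly when its length equals the distance between its endpoints; I keep this for spheres with flat parts. Now, as $\|v+x\|_X=\|v-(-x)\|_X$ and $\tau(-x)=-\tau(x)$, the first characterisation is a statement about chordal distances in $S_X$ among $x$, $-x$ and nearby points, hence is preserved by $\tau$: the norm $\|\cdot\|_X$ fails to be differentiable at $x$ if and only if $\|\cdot\|_Y$ fails to be differentiable at $\tau(x)$. So $\tau$ bijects the set of non-differentiability points of $S_X$ onto that of $S_Y$; this set is antipodally symmetric and avoids $0$, so having more than three points it contains two linearly independent ones $v_1,v_2$, and $w_i:=\tau(v_i)$ are independent as well.

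The core step is transporting the second characterisation. Fix $y\in S_X$ and a differentiability point $z\in S_X$ with $z\neq y$, and put $x:=(z-y)/\|z-y\|_X\in S_X$; let $\gamma_z$ be an arc-length parameterization of $S_X$ with $\gamma_z(0)=z$ and set $G(t):=\|\gamma_z(t)-y\|_X$. The second characterisation says that $\|\cdot\|_X$ is differentiable at $x$ if and only if $G$ is differentiable at $0$. But $\tau(z)$ is a differentiability point of $S_Y$ by the previous paragraph, $\tau\circ\gamma_z$ (after reversing the parameter if needed) is an arc-length parameterization of $S_Y$ with value $\tau(z)$ at $0$, and $\|\tau\gamma_z(t)-\tau y\|_Y=\|\gamma_z(t)-y\|_X=G(t)$; so the same characterisation, read in $Y$, says that $\|\cdot\|_Y$ is differentiable at $x':=(\tau z-\tau y)/\|\tau z-\tau y\|_Y$ if and only if $G$ (equivalently $t\mapsto G(-t)$) is differentiable at $0$. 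Hence $x$ is a non-differentiability point of $S_X$ if and only if $x'$ is one of $S_Y$; equivalently, $\tau$ sends every chord of $S_X$ parallel to a non-differentiability direction onto a chord of $S_Y$ parallel to a non-differentiability direction (chords with a corner at both endpoints being covered by approximating with nearby chords).

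To conclude I would argue as follows. The chords of $S_X$ parallel to $v_1$ form a connected family, and $\tau$ sends each of them onto a chord parallel to one of the finitely many non-differentiability directions of $S_Y$; by continuity this direction is one fixed $v_1^*$, and we may pick its sign so that $\tau(v_1)=v_1^*$ (the diametral chord $[-v_1,v_1]$ maps onto a chord parallel to $v_1^*$). Likewise $v_2$ yields $v_2^*$, and $v_1^*,v_2^*$ are independent. Choosing linear coordinates on $X$ and on $Y$ in which $(v_1,v_2)$ and $(v_1^*,v_2^*)$ are the standard bases, the previous step — applied also to $\tau^{-1}$, which has all the same properties — says that two points of $S_X$ share their second coordinate if and only if their $\tau$-images share theirs in $Y$, and similarly for first coordinates. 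Therefore $\tau(a_1,a_2)=(\chi(a_1),\psi(a_2))$ for monotone homeomorphisms $\chi,\psi$ between intervals, which are odd because $\tau$ preserves antipodes and satisfy $\chi(1)=\psi(1)=1$. It remains to show that $\chi$ and $\psi$ are linear; then, being odd with $\chi(1)=\psi(1)=1$, they are the identity, and $\tau$ is the restriction to $S_X$ of the linear isomorphism sending the basis of $X$ to that of $Y$, which carries $S_X$ onto $S_Y$ and hence is a linear isometry — as required. This last rigidity is where I expect the real work to be: I would read the isometry and arc-length conditions through the diagonal form, using that the length of the vertical chord of $S_X$ on the line of first coordinate $t$ equals the corresponding length in $Y$ read through $\chi$ (and similarly for horizontal chords and $\psi$) and that arc-lengths of corresponding subarcs agree, to force $\chi$ and $\psi$ to be affine, handling any flat parts of $S_X$ and $S_Y$ via the segment-to-segment property from the second paragraph. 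Note that the hypothesis of more than three non-differentiability points is used exactly to produce the two independent directions $v_1,v_2$ that make $\tau$ diagonal.
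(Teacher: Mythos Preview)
Your architecture tracks the paper's proof through the connectedness step: both use Proposition~\ref{Tlocal} to see that $\tau$ carries $\ND(S_X)$ onto $\ND(S_Y)$, then Proposition~\ref{Lejos} to see that chords of $S_X$ in a non-differentiability direction go to chords of $S_Y$ in a non-differentiability direction, and a connectedness/finiteness argument to pin down one fixed target direction for each $v_i$. One small caveat: Proposition~\ref{Lejos} is stated only for strictly convex norms (the Remark after it shows it can fail otherwise), so the non--strictly-convex case has to be handled separately; the paper does this at the outset by quoting \cite[Corollary~3.8]{JCSRefJMAA}.

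The real gap is the endgame. After the diagonal form $\tau(a_1,a_2)=(\chi(a_1),\psi(a_2))$ you concede that forcing $\chi,\psi$ to be linear is ``where the real work is'', and the sketch via arc-length comparison is not a proof. The paper proceeds differently and concretely. It first observes that the isometry upgrades your conclusion: when $u-v=\lambda v_1$ with $\lambda>0$ one has $\tau(u)-\tau(v)=\lambda\,\tau(v_1)$ with the \emph{same} $\lambda$ (both sides have norm $\lambda$, and the sign is fixed along the connected family). It then invokes a ``staircase'' lemma (Lemma~\ref{distintos}): for every direction there exist $u,v,w\in S_X$ with $u-v$ in that direction, $u-w$ parallel to $v_1$ and $w-v$ parallel to $v_2$. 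Chaining the two displacement-preservation statements through $w$ gives $\tau(u)-\tau(v)=u-v$ in coordinates, hence $\|\cdot\|_Y=\|\cdot\|_X$; linearity of $\tau$ then follows from \cite[Theorem~2.3]{JCSRefJMAA}. When Lemma~\ref{distintos} is unavailable (some point of $S_X$ is extremal in two coordinate directions simultaneously), a separate zigzag convergence argument (Lemma~\ref{pincho}) finishes. This staircase idea is the missing ingredient in your proposal; the diagonal form alone, with oddness and $\chi(1)=\psi(1)=1$, does not by itself force $\chi$ and $\psi$ to be the identity.
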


With the same ideas as in the proof of Theorem~\ref{ElT}, we have also been 
able to show this result about, so to say, boundaries of convex open subsets 
of $\R^2$: 

\begin{theorem}\label{ElotroT}
Let $\XnX$ be a strictly convex normed plane such that the amount of points 
in $S_X$ where $\nX$ is not differentiable is finite and greater than 3.
Let $C_X\subset X$ be a piecewise $C^1$ Jordan curve that encloses a convex set. 
If there is some isometry $C_X\to C_Y$ for some piecewise $C^1$ curve 
$C_Y\subset Y$, being $\YnY$ another normed plane that fulfils the same 
that $\XnX$, then $\tau$ is affine and, so, $X$ and $Y$ are isometric. 
\end{theorem}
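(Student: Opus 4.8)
\medskip
\noindent\textbf{Proof proposal.}
The plan is to reduce the statement to a comparison of arc-length parametrizations and then to rebuild a linear isometry from the local data, imitating the mechanism behind Theorem~\ref{ElT}. Let $K_X$ be the closed convex region bounded by $C_X$ and $K_Y$ the one bounded by $C_Y$. Since the isometry $\tau:C_X\to C_Y$ is a bijection between compact metric spaces it is a homeomorphism, so it is orientation-preserving or orientation-reversing and it carries the intrinsic length metric of $C_X$ onto that of $C_Y$, these being determined by $\nX$ and $\nY$ restricted to the curves. Hence $\operatorname{length}_X(C_X)=\operatorname{length}_Y(C_Y)=:L$ and $\tau$ maps an arc-length parametrization of $C_X$ onto one of $C_Y$. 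After translating each curve so that its parametrization starts at the origin and reparametrizing $\gamma_Y$ (reversing its direction if needed), we may assume
\[
\tau(\gamma_X(t))=\gamma_Y(t)\ \ (t\in\R/L\Z),\qquad \|\gamma_X(s)-\gamma_X(t)\|_X=\|\gamma_Y(s)-\gamma_Y(t)\|_Y\ \ (\forall\,s,t),
\]
with $\gamma_X,\gamma_Y$ arc-length parametrizations and $\gamma_X(0)=0=\gamma_Y(0)$. The goal is then to produce a linear map $T:X\to Y$ with $T\circ\gamma_X=\gamma_Y$; as explained below it will automatically be an isometry, so $\tau$ will be affine and $X,Y$ linearly isometric.

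Next I would match the singular data of the two curves. Strict convexity of $X$ gives $\|p-r\|_X=\|p-q\|_X+\|q-r\|_X\iff q\in[p,r]$, and likewise in $Y$, so $\tau$ preserves betweenness; in particular a subarc of $\gamma_X$ is a segment exactly when the chord length equals the arc length along it, which is a metric condition, so the flat subarcs of $\gamma_X$ and of $\gamma_Y$ correspond under $\tau$. Off the flat part $\gamma_X$ is piecewise $C^1$, and at a parameter $t_0$ its one-sided unit tangents $u^{\pm}=\gamma_X'(t_0^{\pm})\in S_X$ satisfy $\|\gamma_X(t_0+h)-\gamma_X(t_0-h)\|_X=h\,\|u^{+}+u^{-}\|_X+o(h)$; by strict convexity $\|u^{+}+u^{-}\|_X=2$ iff $u^{+}=u^{-}$, i.e.\ iff $\gamma_X$ is $C^1$ at $t_0$, and $\|u^{+}+u^{-}\|_X<2$ at a genuine corner. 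Hence $\lim_{h\to0^{+}}\|\gamma_X(t_0+h)-\gamma_X(t_0-h)\|_X/2h$ is a metric quantity detecting corners, so $\tau$ matches the finitely many corners of $C_X$ with those of $C_Y$.

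It remains to rebuild $T$. Put $u_X(t)=\gamma_X'(t)$ and $u_Y(t)=\gamma_Y'(t)$; these are monotone, continuous off the (common) corner parameters, and the isometry condition becomes $\left\|\int_s^t u_X\right\|_X=\left\|\int_s^t u_Y\right\|_Y$ for $s<t$. On a maximal $C^1$ subarc $I$ the chord directions $(\gamma_X(t)-\gamma_X(s))/\|\gamma_X(t)-\gamma_X(s)\|_X$ sweep an arc of $S_X$ monotonically, and by characterization~(2) the differentiability of $\nX$ at such a direction is read off from the differentiability at suitable points of functions $r\mapsto\|\gamma_X(r)-\gamma_X(s_0)\|_X$, which $\tau$ preserves; combined with characterization~(1), $\tau$ matches the non-differentiability points of $S_X$ attained along $I$ with those of $S_Y$. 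Running this over the finitely many $C^1$ subarcs identifies the whole singular structure of $S_X$ with that of $S_Y$, and then comparing the two sides of the displayed identity over sufficiently many windows $[s,t]$ should force $\gamma_Y$ to be a fixed linear image $T\circ\gamma_X$ of $\gamma_X$. Finally, since $C_X$ is a two-dimensional convex curve its chords $\gamma_X(t)-\gamma_X(s)$ realise every direction of $S_X$, and $\|T(\gamma_X(t)-\gamma_X(s))\|_Y=\|\gamma_Y(t)-\gamma_Y(s)\|_Y=\|\gamma_X(t)-\gamma_X(s)\|_X$, so by homogeneity $T$ is a linear isometry of $X$ onto $Y$; hence $\tau$ is affine and $X,Y$ are isometric.

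The hard part will be the step that turns the matched differentiability data into the single linear map $T$: the curve $C_X$ only ``sees'' its norm through the directions of its chords and tangents, and one must verify these are rich and controlled enough --- in particular across a corner, where $u_X$ jumps over a whole arc of $S_X$ that $\tau$ constrains only through the scalar $\|u^{+}+u^{-}\|_X$ --- to transplant the counting of Theorem~\ref{ElT} from $S_X$ to $C_X$. This is exactly where the hypotheses are used: strict convexity of $X$ and $Y$ underlies both the betweenness criterion and the fact that the chords of $C_X$ exhaust $S_X$, while ``finitely many and greater than three'' non-differentiability points is the regime in which characterizations~(1) and~(2) can be invoked finitely often, one $C^1$ subarc at a time, exactly as in the proof of Theorem~\ref{ElT}.
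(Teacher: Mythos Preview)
Your proposal is an outline rather than a proof: the step you yourself flag as ``the hard part'' --- passing from the matched singular data to a \emph{single} linear $T$ with $\gamma_Y=T\circ\gamma_X$ --- is precisely the content of the theorem, and your text does not carry it out. Concretely, the sentence ``comparing the two sides \dots\ over sufficiently many windows $[s,t]$ should force $\gamma_Y$ to be a fixed linear image'' hides the real difficulty: for a fixed $x\in\ND(S_X)$ you never show that \emph{all} chords of $C_X$ in direction $x$ are sent to chords of $C_Y$ in one and the same direction $y\in\ND(S_Y)$. A priori different chords could pick different $y$'s, and nothing in your integral identity $\bigl\|\int_s^t u_X\bigr\|_X=\bigl\|\int_s^t u_Y\bigr\|_Y$ rules this out. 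Likewise, your appeal to ``characterization~(2)'' is to Proposition~\ref{Lejos}, which is stated and proved for $S_X$ and its natural parametrization; transplanting it to a general convex curve $C_X$ is not automatic (for instance there is no antipodal map on $C_X$), and in the paper this adaptation requires a separate argument.

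For comparison, the paper's proof proceeds quite differently after the preliminary observation that $C_Y$ is convex. First it detects corners of $C_X$ not via your symmetric quotient $\|u^{+}+u^{-}\|_X$ (which is a valid and pleasant alternative) but by analysing, for each $a\in C_X$, the set $\NDif(a)$ of $b$ for which $t\mapsto\|\gamma_a(t)-b\|_X$ fails to be differentiable at $0$: this set is ``small'' when $a\in\DD(C_X)$ and ``large'' when $a\in\ND(C_X)$, a purely metric dichotomy. It then proves an analogue of Proposition~\ref{Lejos} for $C_X$, and --- this is the missing engine in your outline --- runs an open/closed connectedness argument on the set $\C_x\subset C_X$ of points whose $x$-direction chord is sent to a chord in a fixed direction $y$, forcing $\C_x$ to be all of (one half of) $C_X$. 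Two geometric lemmas (your curve either has four distinct extremal points, or one doubly-extremal point generating a convergent ``staircase'') upgrade this to $\|(\lambda,\mu)\|_X=\|(\lambda,\mu)\|_Y$ in the bases $\{x,\bar x\}$ and $\{y,\bar y\}$. Finally, even after identifying $X$ and $Y$, affinity of $\tau$ is not free: the paper finishes with a quadrant argument (Claims~\ref{4Q}--\ref{S}) showing that the set $\E_a=\{b:\tau(a)-\tau(b)=a-b\}$ exhausts $C_X$. None of these three mechanisms --- the connectedness argument for $\C_x$, the extremal-point dichotomy, and the quadrant claims --- appears in your proposal, and each does real work that your ``should force'' would have to replace.
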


\subsection{Notations and background}

\begin{remark}
Given some Jordan curve $C\subset\R^2$, we will say that $C$ is a convex curve 
when it encloses a convex region. 

For any convex piecewise smooth curve $C\subset\R^2$, it is known that there 
are parameterizations $\gamma:\R \to C$ that are smooth at $t$ if and only 
if $C$ is smooth at $\gamma(t)$ and have one-sided derivatives $\gamma_-'(t)$ 
and $\gamma_+'(t)$ at every other $t\in\R $. We will only consider these 
parameterizations. 

As we will heavily use the arc-length anticlockwise parameterization of $S_X$ 
beginning at some point, we will denote this curve in a special way: 

If $z$ belongs to some piecewise $C^1$-differentiable, convex, Jordan curve 
$C\subset\R^2$, then $\gamma_z:\R \to C$ will denote the only anticlockwise 
parameterization of $C$ that fulfils $\gamma_z(0)=\gamma_z(L)=z$, is injective 
when restricted to $[0,L)$, is $L$-periodic and has $\|\gamma_{z,-}'(t)\|_X=
\|\gamma_{z,+}'(t)\|_X=1$ for every $t\in\R $. This parameterization is 
also known as {\em the natural parameterization} of $C$, see \cite{TBanakhC2}. 
\end{remark}

\begin{definition}
Let $(X,\nX)$ be a normed space. We say that $x$ is Birkhoff orthogonal to $y$, 
denoted as $x\perp_B y$, if $\|x+\lambda y\|_X\geq \|x\|_X$ for every 
$\lambda\in\R$. We will denote $x^\perp=\{y\in X:x\perp_B y\}$. 
\end{definition}

The reader interested in this and related concepts may want to take a look 
at~\cite{alonso2012birkhoff}. It is noteworthy that the main subjects 
of~\cite{alonso2012birkhoff} are two kinds of orthogonality, one of them 
is obviously preserved by isometries of the sphere but we will deal 
with the other one. 

For our particular concern, Birkhoff orthogonality is important due to the 
following two results. 

\begin{proposition}\label{contieneH}{\em\cite[Theorem 2.2]{james1947}, 
\cite[Theorem 4.12]{alonso2012birkhoff}}
For any vector $x$ in a normed linear space $X$ there exists a hyperplane 
$H\subset X$ such that $x\perp_B H$. 
\end{proposition}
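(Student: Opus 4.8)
The plan is to obtain $H$ as the kernel of a supporting functional at $x$, so that the whole statement reduces to a single application of the Hahn--Banach theorem. If $x=0$ the assertion is vacuous, since then $\|x+\lambda y\|_X=|\lambda|\,\|y\|_X\geq 0=\|x\|_X$ for every $y$, and so any hyperplane $H\subset X$ satisfies $x\perp_B H$; hence I assume $x\neq 0$ from now on.

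First I would apply the Hahn--Banach theorem to the linear functional $t\,x\mapsto t\,\|x\|_X$ defined on $\span\{x\}$ and extend it to a functional $f\in X^*$ with $\|f\|=1$ and $f(x)=\|x\|_X$. Put $H=\ker f$. Since $f(x)=\|x\|_X>0$, the functional $f$ is nonzero, so $H$ is a hyperplane (a subspace of codimension one, automatically closed). It then remains to verify $x\perp_B H$: for every $h\in H$ and every $\lambda\in\R$, using $\|f\|=1$ and $f(h)=0$,
$$\|x+\lambda h\|_X\ \geq\ f(x+\lambda h)\ =\ f(x)+\lambda f(h)\ =\ f(x)\ =\ \|x\|_X .$$
Since $h\in H$ and $\lambda\in\R$ were arbitrary, this is precisely the statement that $x$ is Birkhoff orthogonal to every vector of $H$, that is, $x\perp_B H$.

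There is essentially no obstacle in this argument, it being a direct corollary of Hahn--Banach; the only point to keep an eye on is that the extended functional be genuinely nonzero, so that $H$ is a true hyperplane rather than all of $X$, and that is guaranteed by $f(x)=\|x\|_X\neq 0$. One may also note that completeness of $X$ plays no role here, and that geometrically the coset $x+H$ is a hyperplane supporting the ball $\{v\in X:\|v\|_X\leq\|x\|_X\}$ at the point $x$, which is an equivalent reformulation of the conclusion.
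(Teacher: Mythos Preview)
Your proof is correct. The paper does not supply its own argument for this proposition, as it is quoted from the literature (James~1947 and Alonso et al.~2012); the standard proof in those references is precisely the Hahn--Banach construction you give, producing a norming functional $f$ at $x$ and taking $H=\ker f$.
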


\begin{proposition}\label{esH}{\em\cite[Theorem 4.2]{james1947},\ 
\cite[Theorem 4.15]{alonso2012birkhoff}}
The norm of a normed linear space $X$ is Gâteaux differentiable at 
$x\in X\setminus\{0\}$ if and only if $x^\perp$ is a hyperplane. 
\end{proposition}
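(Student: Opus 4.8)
The plan is to route both properties through the set of norming functionals at $x$,
$$J(x)=\{f\in X^*:\|f\|=1,\ f(x)=\|x\|\},$$
which is nonempty by Hahn--Banach. The two pillars are an explicit description of $x^\perp$ in terms of $J(x)$ and the classical equivalence between Gâteaux differentiability of the norm at $x$ and $J(x)$ being a singleton. Once these are in place, the proposition reduces to a short counting argument about hyperplanes.

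First I would establish the identity $x^\perp=\bigcup_{f\in J(x)}\ker f$, which is the orthogonality half of James's work. The easy inclusion is immediate: if $f\in J(x)$ and $f(y)=0$, then $\|x+\lambda y\|\ge f(x+\lambda y)=f(x)=\|x\|$ for every $\lambda$, so $y\in x^\perp$ and hence $\ker f\subseteq x^\perp$. For the reverse inclusion I would use the Hahn--Banach distance formula: $y\in x^\perp$ says exactly that $\operatorname{dist}(x,\span\{y\})=\inf_\lambda\|x+\lambda y\|=\|x\|$, and this distance equals $\max\{g(x):\|g\|\le 1,\ g(y)=0\}$; the maximizer $g$ then satisfies $g(x)=\|x\|$, which forces $\|g\|=1$, so $g\in J(x)$ with $y\in\ker g$.

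Next I would invoke the standard duality between the one-sided directional derivatives $\tau_\pm(x,y)=\lim_{t\to 0^\pm}(\|x+ty\|-\|x\|)/t$ (which exist by convexity of $t\mapsto\|x+ty\|$) and the extreme values of norming functionals, namely $\tau_+(x,y)=\max_{f\in J(x)}f(y)$ and $\tau_-(x,y)=\min_{f\in J(x)}f(y)$; this gives that the norm is Gâteaux differentiable at $x$ if and only if $J(x)$ is a singleton. The proposition then follows in both directions. If the norm is differentiable at $x$, then $J(x)=\{f\}$ and the identity above yields $x^\perp=\ker f$, a hyperplane. Conversely, suppose $x^\perp$ is a hyperplane $H$. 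Choosing $\lambda=-1$ gives $\|x-x\|=0<\|x\|$, so $x\notin x^\perp=H$; thus $H$ has codimension exactly $1$ and $x\notin H$. Each $f\in J(x)$ has $\ker f\subseteq x^\perp=H$ with $\ker f$ itself of codimension $1$, so $\ker f=H$; functionals with a common kernel are proportional, and the normalization $f(x)=\|x\|$ together with $x\notin H$ pins down the scalar, so $f$ is unique. Hence $J(x)$ is a singleton and the norm is Gâteaux differentiable at $x$.

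I expect the main obstacle to be the nontrivial inclusion $x^\perp\subseteq\bigcup_{f\in J(x)}\ker f$ and the differentiability$\Leftrightarrow$uniqueness equivalence: both rest on Hahn--Banach and on the identification of the one-sided derivatives of the norm with the extreme values of the norming functionals, and it is there that all of the convex-analytic content is concentrated. Once those ingredients are available, the passage between ``$J(x)$ is a singleton'' and ``$x^\perp$ is a hyperplane'' is purely linear-algebraic, using only that a codimension-one subspace cannot sit properly inside another.
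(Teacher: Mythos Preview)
The paper does not supply its own proof of this proposition: it is stated with citations to \cite[Theorem 4.2]{james1947} and \cite[Theorem 4.15]{alonso2012birkhoff} and used as a black box thereafter. There is therefore nothing in the paper to compare your argument against.

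That said, your proof is correct and is essentially the classical route taken in James's original paper: relate both sides of the equivalence to the set $J(x)$ of supporting functionals, using (i) the identity $x^\perp=\bigcup_{f\in J(x)}\ker f$ obtained from Hahn--Banach, and (ii) the \v{S}mulian-type characterisation of G\^ateaux smoothness as $|J(x)|=1$ via the formula $\tau_\pm(x,y)=\max/\min_{f\in J(x)}f(y)$. Your closing linear-algebraic step (two codimension-one subspaces, one contained in the other, must coincide; proportional functionals normalised by $f(x)=\|x\|$ with $x\notin\ker f$ are equal) is also fine. The only places where real work hides are exactly the two you flag in your last paragraph, and both are standard and correctly invoked.
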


As for the differentiability of finite-dimensional norms, 
joining \cite[Theorem 25.2 and Corollary 25.5.1]{Rock} we obtain: 

\begin{proposition}\label{proprock}
Let $f$ be a convex function on an open convex set $A\subset\R^d$. If $f$ 
has all partial derivatives at each point of $A$, then $f\in C^1(A)$. 
\end{proposition}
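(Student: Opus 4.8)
The plan is to establish the statement in two stages that together yield continuous differentiability: first, that for a convex $f$ the mere existence of all partial derivatives at a point already forces $f$ to be fully differentiable there; and second, that once $f$ is differentiable at every point of the open set $A$, its gradient depends continuously on the point. Granting these, $f$ has a continuous gradient on $A$, which is precisely the assertion $f\in C^1(A)$. These two stages are exactly \cite[Theorem 25.2 and Corollary 25.5.1]{Rock}; what follows sketches the reasoning behind them, so that one may either reproduce it or quote the references directly.

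For the first stage, fix $x\in A$ and recall that for a convex function the one-sided directional derivative
$$
g(v)=\lim_{t\to 0^+}\frac{f(x+tv)-f(x)}{t}
$$
exists for every $v\in\R^d$, because the difference quotient is monotone in $t$, and that $v\mapsto g(v)$ is sublinear, i.e. positively homogeneous and subadditive. The hypothesis that the two-sided partial derivative $\partial_j f(x)$ exists says exactly that $g(e_j)=-g(-e_j)=\partial_j f(x)$ for each coordinate vector $e_j$. Positive homogeneity then gives $g(v_j e_j)=v_j\,\partial_j f(x)$ irrespective of the sign of $v_j$, so subadditivity yields, for $v=\sum_j v_j e_j$, the upper bound $g(v)\le\sum_j v_j\,\partial_j f(x)=:\ell(v)$. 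Applying the same bound to $-v$ and using $0=g(0)\le g(v)+g(-v)$ forces $g(v)\ge\ell(v)$ as well, so $g=\ell$ is linear. Hence $f$ is Gâteaux differentiable at $x$ with $\nabla f(x)=\ell$; since $f$ is convex and locally Lipschitz on the open set $A$, Gâteaux differentiability coincides with Fréchet differentiability in this finite-dimensional setting.

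For the second stage, note that local Lipschitzness of the convex $f$ makes its subgradients locally bounded, and that the subdifferential map $x\mapsto\partial f(x)$ has closed graph, since a limit of subgradients taken at points converging to $x$ is again a subgradient at $x$, by passing to the limit in the subgradient inequality. On $A$ the first stage shows that $\partial f(x)=\{\nabla f(x)\}$ is a singleton at every point, so $\nabla f$ is a single-valued, locally bounded map with closed graph on $A$; such a map is automatically continuous, because along any sequence $x_n\to x$ local boundedness lets us extract convergent subsequences of $\nabla f(x_n)$, each limit lying in the graph and hence equal to $\nabla f(x)$. Therefore $\nabla f$ is continuous on $A$ and $f\in C^1(A)$.

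The only genuinely delicate points are the sandwich argument that upgrades sublinearity to linearity once the partials are two-sided, and the step deducing continuity of the gradient from closed graph together with local boundedness; both are standard facts about convex functions, and the whole proposition may be quoted directly from \cite{Rock} if one prefers not to reproduce them.
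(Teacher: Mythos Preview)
Your argument is correct. The paper itself does not give a proof of this proposition at all: it simply records it as the combination of \cite[Theorem 25.2 and Corollary 25.5.1]{Rock} and moves on. Your proposal is therefore not merely ``essentially the same approach'' but strictly more, since you actually sketch why those two results hold---the sublinearity-to-linearity sandwich for the directional derivative, and the closed-graph plus local-boundedness argument for continuity of $\nabla f$. Both stages are reproduced accurately, and you explicitly flag that one may just quote Rockafellar, which is exactly what the paper does.
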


Proposition~\ref{proprock} implies that the usual differences between the various 
kinds of differentiability do not exist when we deal with a convex function like 
$\nX:\R^n\to\R$. In particular, 

\begin{lemma}\label{GatoBir}
Let $(X,\norma_X)$ be a finite-dimensional normed space. Then, the following 
conditions are equivalent to one another: 
\begin{itemize}
\item $\nX$ is $C^1$-differentiable. 
\item $\nX$ is Fréchet differentiable. 
\item $\nX$ is Gâteaux differentiable. 
\item $S_X$ is a differentiable manifold. 
\item For each $x\in X, x\neq 0$, $x^\perp$ is a hyperplane. 
\item If, in addition, $X$ is two-dimensional, then the above conditions are 
equivalent to the fact that for every $x\in S_X, t\in \R $, the equality 
$\gamma'_{x,-}(t)=\gamma'_{x,+}(t)$ holds. 
\end{itemize}
\end{lemma}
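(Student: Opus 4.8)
The plan is to establish the chain of equivalences in Lemma~\ref{GatoBir} by reducing most of it to the classical facts already quoted, and then to handle the genuinely new two-dimensional item by hand. The equivalence of $C^1$-, Fréchet- and Gâteaux-differentiability of $\nX$ is exactly what Proposition~\ref{proprock} gives once one observes that $\nX:\R^n\to\R$ is convex: Gâteaux differentiability at a point forces all partial derivatives to exist, hence by Proposition~\ref{proprock} the norm is $C^1$ on the open set where it is Gâteaux differentiable, and on $\R^n\setminus\{0\}$ the norm is everywhere Gâteaux differentiable if it is so at one point on each ray by homogeneity; conversely $C^1\Rightarrow$ Fréchet $\Rightarrow$ Gâteaux is trivial. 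The equivalence with ``$x^\perp$ is a hyperplane for every $x\neq 0$'' is precisely Proposition~\ref{esH}, noting that differentiability at $x$ and at $\lambda x$ coincide for $\lambda\neq 0$ by positive homogeneity, so it suffices to quantify over $x\in S_X$. The equivalence with ``$S_X$ is a differentiable manifold'' follows because $S_X=\nX^{-1}(1)$ is a regular level set precisely when $\nX$ is $C^1$ with nonvanishing gradient there, and the gradient of a norm never vanishes off the origin (it satisfies $\langle\nabla\nX(x),x\rangle=\nX(x)=1$).

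For the last, two-dimensional item I would argue as follows. First I would record the standard geometric description of one-sided derivatives of the natural parameterization: $\gamma'_{x,+}(t)$ and $\gamma'_{x,-}(t)$ are the two unit (in $\nX$) tangent directions to $S_X$ at the point $\gamma_x(t)$, and $S_X$ is differentiable at that point exactly when these two coincide. Hence the condition ``$\gamma'_{x,-}(t)=\gamma'_{x,+}(t)$ for all $x,t$'' says precisely that $S_X$ has a unique tangent line at every point, i.e.\ that $S_X$ is a differentiable manifold, which is one of the conditions already shown equivalent to the others. The one subtlety is to make sure the quantifier over $x\in S_X$ and $t\in\R$ really sweeps out all points of $S_X$ and that the notion of one-sided derivative is parameterization-independent up to the obvious reparameterization; but since $\gamma_x(t)$ for fixed $x$ and varying $t$ already traverses all of $S_X$, a single $x$ suffices, and changing the base point $x$ only shifts the parameter, so the two-sided condition at a point is well defined.

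I would also note the small point that the one-sided derivatives exist everywhere for a convex curve — this is exactly the content of the remark on natural parameterizations quoted earlier — so the equality $\gamma'_{x,-}(t)=\gamma'_{x,+}(t)$ is the only possible obstruction to smoothness, and there is no need to separately rule out corners of higher order or cusps. Assembling these observations: (Gâteaux) $\Leftrightarrow$ ($x^\perp$ a hyperplane) by Proposition~\ref{esH}; (Gâteaux) $\Leftrightarrow$ ($C^1$) $\Leftrightarrow$ (Fréchet) by Proposition~\ref{proprock} and convexity; ($C^1$) $\Leftrightarrow$ ($S_X$ a differentiable manifold) by the regular-level-set argument; and in dimension two, ($S_X$ a differentiable manifold) $\Leftrightarrow$ (equal one-sided derivatives of $\gamma_x$) by the tangent-line description. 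The main obstacle, such as it is, is purely bookkeeping: carefully connecting the analytic statement ``$x^\perp$ is a hyperplane'' with the geometric statement ``$S_X$ has a unique supporting line at $x/\nX(x)$'' and with ``the two one-sided tangent vectors of the arc-length parameterization agree'', keeping track of the identification of supporting hyperplanes of the unit ball with tangent lines of the unit sphere. None of these steps is deep, but the chain has to be closed cleanly so that every one of the listed conditions is linked to the rest.
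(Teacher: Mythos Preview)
Your proposal is correct and follows exactly the route the paper implicitly suggests: the paper gives no proof of this lemma at all, presenting it as a direct consequence of Proposition~\ref{proprock} together with Propositions~\ref{contieneH} and~\ref{esH}, and your argument simply spells out that deduction carefully, including the extra two-dimensional clause.

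Two very minor points worth tightening, neither of which is a genuine gap. First, Proposition~\ref{proprock} is stated for an open \emph{convex} set $A$, while you apply it on $\R^n\setminus\{0\}$; this is harmless (apply it on each open half-space, or invoke homogeneity to reduce to a convex neighbourhood of any given point), but you may want to say one word about it. Second, for the equivalence with ``$S_X$ is a differentiable manifold'' your regular-level-set argument cleanly gives $C^1\Rightarrow$ manifold; for the converse you should note explicitly that a convex body with $C^1$ boundary has $C^1$ Minkowski functional away from the origin (again standard, via homogeneity). Other than these cosmetic points, your chain of equivalences is complete and matches the paper's intended argument.
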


We will also use this Lemma that Professor Javier Alonso gifted me some years ago: 

\begin{lemma}[J.~Alonso]\label{Alonso}
Let $\RRX$ be a two-dimensional normed space and $x\in S_X$. If $y\in S_X$ 
is a side derivative of the natural parameterization of $S_X$ at $x$, then 
$x\perp_B y$. 
\end{lemma}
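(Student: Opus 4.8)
The plan is to reduce the statement to a convexity estimate about the growth of the norm along the curve $S_X$, using the very definition of arc-length parameterization. Fix $x\in S_X$ and let $y=\gamma'_{x,+}(0)$ be the right-hand side derivative at $x$ (the case of $\gamma'_{x,-}(0)$ being symmetric, or reducible to it by reversing orientation). Since $\gamma_x$ is the natural parameterization, $\|y\|_X=1$, and for small $t>0$ we have $\gamma_x(t)=x+ty+o(t)$. To prove $x\perp_B y$ we must show $\|x+\lambda y\|_X\geq 1$ for all $\lambda\in\R$. By convexity of the norm it suffices to check this for $\lambda$ in a neighbourhood of $0$ on each side, so the crux is the infinitesimal inequality: the one-sided directional derivative of $\nX$ at $x$ in the direction $y$ is nonnegative, and likewise in the direction $-y$.

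First I would establish the key inequality $\|\gamma_x(t)\|_X = 1 \leq \|x+ty\|_X$ for small $t>0$. The point on the segment from $x$ to $x+ty$ that is ``radially'' comparable to $\gamma_x(t)$: since $\gamma_x(t)$ lies on $S_X$ and the segment $[x,x+ty]$ is a chord-like object leaving from $x$ tangentially, convexity of the unit ball forces $x+ty$ to lie outside or on $S_X$. More precisely, write $\gamma_x(t)=x+ty+r(t)$ with $\|r(t)\|_X=o(t)$; then $\|x+ty\|_X\geq \|\gamma_x(t)\|_X-\|r(t)\|_X = 1-o(t)$. This gives $\liminf_{t\to0^+}\frac{\|x+ty\|_X-1}{t}\geq 0$, i.e. the right directional derivative $D_+\nX(x)(y)\geq 0$. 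For the direction $-y$, I would run the symmetric argument using that $\gamma_x$ restricted near $0$ stays on the convex curve on the other side as well, or simply invoke convexity of $t\mapsto\|x+ty\|_X$ together with the fact that this function attains value $1$ at $t=0$ with nonnegative right derivative — but this alone does not bound the left derivative, so instead I would use the second side derivative or the geometry directly: the tangent line $\{x+ty:t\in\R\}$ is a supporting line of the convex set $B_X$ at $x$ precisely because $y$ is a one-sided tangent direction of the boundary curve, hence the whole line lies in the closed complement of $\interior B_X$, giving $\|x+\lambda y\|_X\geq 1$ for all $\lambda$.

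The cleanest route, which I would adopt, is therefore: (i) observe that a one-sided derivative $y$ of the natural parameterization at $x$ spans a one-sided tangent ray to the convex curve $S_X=\partial B_X$ at $x$; (ii) invoke the standard fact from convex geometry that at any boundary point of a planar convex body, each one-sided tangent ray extends to a supporting line of the body (this is where piecewise-$C^1$ convexity is used — the one-sided derivatives exist and the set of directions between $\gamma'_{x,-}$ and $\gamma'_{x,+}$ sweeps the normal cone); (iii) conclude that the line $\span\{y\}$ translated to pass through $x$ supports $B_X$, which is exactly the statement $\|x+\lambda y\|_X\geq\|x\|_X=1$ for all $\lambda\in\R$, i.e. $x\perp_B y$. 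The main obstacle is purely a matter of care at step (ii): one must make sure that a \emph{one-sided} tangent direction (not the full tangent line, which may not exist at a corner) still yields a supporting line, and that the normalization $\|y\|_X=1$ coming from arc length is consistent — but no estimate is genuinely hard here, the content is the supporting-hyperplane property of convex bodies combined with the definition of Birkhoff orthogonality.
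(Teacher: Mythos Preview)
Your final argument via supporting lines (steps (i)--(iii)) is correct: the one-sided tangent direction at a boundary point of a planar convex body does extend to a supporting line --- this can be checked by writing $\partial B_X$ locally as the graph of a convex function $g$ and using $g'_-(0)\le g'_+(0)$ to see that the right tangent line lies below the graph on both sides --- and a supporting line through $x$ in direction $y$ is exactly the statement $\|x+\lambda y\|_X\ge 1$ for all $\lambda$. The earlier detour establishing only $D_+\nX(x)(y)\ge 0$ is, as you yourself observe, insufficient on its own, so it is good that you moved past it rather than trying to salvage it.

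The paper's proof takes a different, more self-contained route: rather than invoking the supporting-hyperplane property of convex bodies, it fixes $\lambda$, rewrites $x+\lambda y$ as the limit $\lim_{t\to 0^+}\bigl(\frac{\lambda}{t}\gamma_x(t)+(1-\frac{\lambda}{t})x\bigr)$ of affine combinations of two unit vectors, and then applies the reverse triangle inequality to obtain $\|x+\lambda y\|_X \ge \lim_{t\to 0^+}\bigl|\,|\lambda/t|-|1-\lambda/t|\,\bigr| = 1$ directly. Your approach is more geometric and makes the role of the convexity of $B_X$ explicit; the paper's is a two-line computation that never leaves the norm axioms, at the cost of a slightly opaque limiting trick. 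Both are valid; the paper's version has the minor advantage of not needing any auxiliary convex-geometry fact to be verified, while yours explains \emph{why} the result holds.
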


\begin{proof}
We need to show that $\|x+\lambda y\|_X\geq 1$ for every $\lambda\in\R$, with 
$$y=\lim_{t\to 0^+}\frac{\gamma_x(t)-x}{t}.$$
We have the following: 
\begin{equation}
\begin{aligned}
\|x+\lambda y\|_X= &
\left\|x+\lambda \lim_{t\to 0^+}\frac{\gamma_x(t)-x}{t}\right\|_X =
\lim_{t\to 0^+} \left\|x+\lambda \frac{\gamma_x(t)-x}{t}\right\|_X = \\
\lim_{t\to 0^+} & \left\|
\frac{\lambda}{t}\gamma_x(t)+\left( 1-\frac{\lambda}{t} \right) x
\right\|_X \geq 
\lim_{t\to 0^+}
\left| \left\| \frac{\lambda}{t}\gamma_x(t) \right\|_X -
\left\| \left( 1-\frac{\lambda}{t} \right) \right\|_X \right|= \\
\lim_{t\to 0^+} &
\left| \left| \frac{\lambda}{t} \right|- \left| 1-\frac{\lambda}{t} \right| \right| 
=1, \text{ for every } \lambda\in\R. 
\end{aligned}
\end{equation}
\end{proof}

\section{Main results}

We will prove that the differentiability of $\nX$ at some $x$ depends on the 
infinitesimal metric structure of $S_X$ around $x$. Later, we will show that 
it can be determined by means of computations carried away far from $x$. 
Joining both facts we will arrive at our main results after some extra work. 

\begin{proposition}\label{Tlocal} 
Let $(X,\nX)$ be a finite-dimensional space. The differentiability of $\nX$ at any 
$x\in S_X$ depends only on the metric structure of the unit sphere $(S_X,\nX)$ 
near $x$ and $-x$. Namely, $\nX$ fails to be differentiable at $x$ if and only 
if there exist $\delta, \e_0>0$ such that for every $\e<\e_0$ there exist 
$u,v\in S_X\!\setminus\!\{\pm x\}$ such that
\begin{equation}\label{uv}
\max\{\|u-x\|_X, \|v+x\|_X\}\leq\e,\qquad \|u-v\|_X\leq 2-\delta\e.
\end{equation}
\end{proposition}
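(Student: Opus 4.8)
The plan is to characterize non-differentiability of $\nX$ at $x$ through the behaviour of the natural parameterization $\gamma_x$ near $0$ (where $\gamma_x(0)=x$) and near the parameter value corresponding to $-x$, and then translate this into the purely metric condition \eqref{uv}. Fix an arc-length parameterization $\gamma_x$ of the $2$-dimensional sphere $S_X$ (reducing to the planar case first, then explaining the reduction to dimension $2$ via Lemma~\ref{GatoBir} and Proposition~\ref{esH}, which say that differentiability at $x$ is equivalent to $x^\perp$ being a hyperplane; in the plane this is exactly the equality of the one-sided derivatives $\gamma_{x,-}'(0)=\gamma_{x,+}'(0)$). For higher dimensions one restricts to the two-dimensional subspace spanned by $x$ and a direction witnessing non-smoothness: if $\nX$ is not differentiable at $x$ there is, by Proposition~\ref{esH}, a two-dimensional section in which the norm is not differentiable at $x$ either, and the vectors $u,v$ found there serve globally; conversely, if such $u,v$ exist near $x,-x$ they lie (up to taking a subsequence and a limiting $2$-plane) in some fixed two-dimensional section, forcing non-differentiability there and hence at $x$.

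The core of the argument is therefore the planar equivalence. \emph{Necessity:} suppose $\nX$ is not differentiable at $x$, so $\gamma_{x,-}'(0)=:-y_-$ and $\gamma_{x,+}'(0)=:y_+$ are distinct unit vectors with $y_+\ne y_-$ (both Birkhoff-orthogonal to $x$ by Lemma~\ref{Alonso}). The cone of directions between $y_+$ and $y_-$ has positive opening angle $2\theta>0$. Take $u=\gamma_x(\e)$ and $v=\gamma_x(\sigma-\e)$ where $\sigma$ is the parameter with $\gamma_x(\sigma)=-x$; then $\|u-x\|_X\le\e$ and $\|v+x\|_X\le\e$ trivially by arc-length. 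For the key inequality $\|u-v\|_X\le 2-\delta\e$: write $u-v=(u-x)+(x-(-x))+((-x)-v)$; the middle term has norm $2$, and the first and last terms are, to first order, $\e y_+$ and $-\e\gamma_{x,-}'(\sigma)$ respectively. Because $x$ has a supporting functional $f$ with $f(x)=1$, $\|x\|_X=1$, and by Birkhoff orthogonality $f(y_+)=f(\gamma_{x,-}'(\sigma))=0$ is \emph{not} possible for \emph{all} supporting functionals when the norm is non-smooth — there is a supporting functional $f$ at $x$ with $f(y_+)<0$ (or symmetrically $>0$), giving $f(u)\le 1-c\e$ for some $c>0$, hence $\|u-v\|_X\ge f(v-u)$ fails the wrong way; instead one uses the supporting functional at $-x$, or, cleanly, the convexity estimate: the segment from $u$ to $v$ passes at parameter-distance $\approx\e$ from both $x$ and $-x$ on the \emph{short} side, so the chord $u-v$ is strictly shorter than the "diameter" $2$ by an amount proportional to $\e$ — this is where the angle $\theta$ enters, yielding $\delta=\delta(\theta)>0$.

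\emph{Sufficiency:} conversely, assume $\nX$ is differentiable at $x$, so $\gamma_x$ is $C^1$ at $0$ and, by strict-convexity-free reasoning plus Lemma~\ref{GatoBir}, also at $\sigma$ (differentiability of the norm at $-x$ equals differentiability at $x$). Then for any $u,v\in S_X$ with $\|u-x\|_X\le\e$, $\|v+x\|_X\le\e$, I want $\|u-v\|_X\ge 2-\delta\e$ to \emph{fail} for every fixed $\delta$ once $\e$ is small, i.e. $\|u-v\|_X\ge 2-o(\e)$. The point $u$ lies within $\e$ of $x$ on $S_X$; since the norm is smooth at $x$, the sphere near $x$ is sandwiched between the supporting line and a parabola-like cap, so $u=x+\e_1 w + O(\e_1^2)$ with $w=\gamma_x'(0)$ and $|\e_1|\le\e(1+o(1))$; similarly $v=-x+\e_2 w' + O(\e_2^2)$ with $w'=\gamma_x'(\sigma)$. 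Evaluating a supporting functional $f$ at $x$: $f(u)=1+O(\e^2)$ (first-order term vanishes by Birkhoff orthogonality and differentiability), and $f(v)=-1+O(\e^2)$, so $\|u-v\|_X\ge |f(u-v)|\ge 2-O(\e^2)\ge 2-\delta\e$ for small $\e$, contradicting \eqref{uv}. Hence differentiability at $x$ rules out the configuration, completing the equivalence.

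The main obstacle I anticipate is making the necessity direction's lower-order estimate uniform and quantitative: one must show the chord $\|u-v\|_X$ is shorter than $2$ by a term \emph{linear} in $\e$ (not merely $o(1)$), which requires genuinely using the positive opening angle between $\gamma_{x,-}'(0)$ and $\gamma_{x,+}'(0)$ together with a supporting-functional/convexity argument at \emph{both} $x$ and $-x$ simultaneously; the bookkeeping of which one-sided derivative to use at $\sigma$ (it is $\gamma_{x,+}'(\sigma)=-y_+$ up to sign conventions coming from the anticlockwise orientation passing through $-x$) is the fiddly part. The reduction from general finite dimension to dimension $2$ is comparatively routine once Proposition~\ref{esH} is invoked, but care is needed so that the $u,v$ produced in the $2$-plane genuinely lie on $S_X$ and satisfy the stated inequalities with the \emph{same} $\delta$.
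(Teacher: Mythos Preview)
Your sufficiency direction (differentiable $\Rightarrow$ no such $\delta,\e_0$) is essentially correct, though note that mere $C^1$-differentiability only gives $u=x+\e_1 w+o(\e_1)$, not $O(\e_1^2)$; fortunately $o(\e)$ is exactly what you need. The supporting-functional argument works in any dimension without the reduction to a $2$-plane, so your subsequence-and-limiting-plane remark for this direction is unnecessary (and, as written, not quite right: the planes $\operatorname{span}\{u-x,v+x\}$ need not stabilize as $\e\to 0$).

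The necessity direction, however, has a genuine gap. You correctly identify that supporting functionals give \emph{lower} bounds on $\|u-v\|_X$, so they ``fail the wrong way'' for proving $\|u-v\|_X\le 2-\delta\e$; but you then offer only a heuristic (``the chord is shorter than the diameter by an amount proportional to $\e$, this is where the angle $\theta$ enters'') without actually producing $\delta$. The obstacle you anticipate is real and you have not overcome it: with your choice $u=\gamma_x(\e)$, $v=-\gamma_x(-\e)$ one has $u-v\approx 2x+\e(\gamma'_{x,+}(0)-\gamma'_{x,-}(0))$, and showing this has norm $\le 2-\delta\e$ requires an \emph{upper} bound on a norm near the corner, for which neither Birkhoff orthogonality nor a single functional suffices.

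The paper's proof avoids this by choosing coordinates and constructing $u,v$ differently. In the basis $\B=\{x,y\}$ with $y=\gamma'_{x,+}(0)$ and $z=\gamma'_{x,-}(0)=(z_1,z_2)$ (with $z_1,z_2>0$), the two Birkhoff-orthogonality relations $x\perp_B y$ and $x\perp_B z$ yield the containment $B_X\subset\{(\alpha,\beta):\alpha\le 1,\ \beta z_1/z_2\le\alpha+1\}$. For each small $t>0$, the horizontal line at height $t$ meets $S_X$ in exactly two points $u=(a^+(t),t)$ and $v=(a^-(t),t)$; then $\|u-v\|_X=a^+(t)-a^-(t)\le 2-t z_1/z_2$ is immediate from the containment, while $\|u-x\|_X,\|v+x\|_X\le 2t$ follows from $\nX\le\norma_1$ (since $B_X\supset\operatorname{conv}\{(\pm 1,0),(0,1)\}$). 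Taking $\e=2t$ and $\delta=z_1/(2z_2)$ finishes the job. The key idea you are missing is to use the \emph{second} supporting line at $x$ (coming from $z$) as a global constraint on $B_X$ near $-x$, which converts the corner angle directly into the linear deficit $t z_1/z_2$.
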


\begin{proof}
Let $x\in S_X$. 

It is clear that if $\nX$ is differentiable at $x$ then for every 
$\delta, \e_0>0$ there is $0<\e<\e_0$ such that (\ref{uv}) cannot hold
for every $u,v\in S_X\!\setminus\!\{\pm x\}.$

If $\nX$ is not differentiable at $x$, then Proposition~\ref{contieneH} and 
Lemma~\ref{GatoBir} imply that $x^\perp$ contains strictly a hyperplane, so 
there is some pair of independent vectors $y,z\in S_X\cap x^\perp$ such that 
$x\in\operatorname{span}\{y,z\}$. We are going to show that there are $u,v\in 
S_X\cap\operatorname{span}\{y,z\}$ that fulfil (\ref{uv}), so we may suppose 
that $X$ is two-dimensional and $X=\operatorname{span}\{y,z\}$. Taking any 
orientation on $X$, we may define $\gamma_x$. As the side 
derivatives of $\gamma_x$ at $0$ are different and fulfil 
$x\perp_B\gamma'_{x,-}(0)$ and $x\perp_B\gamma'_{x,+}(0)$ (Lemma~\ref{Alonso}), 
we may suppose $y=\gamma'_{x,+}(0)$, $z=\gamma'_{x,-}(0)$. 

Taking into account that $x\perp_B y$ if and only if $x\perp_B -y$, we may 
suppose that there exist $\lambda, \mu>0$ such that $x=-\lambda y+\mu z$. 
Consider the basis $\B=\{x,y\}$. Taking coordinates with respect to $\B$, we 
have $z=(z_1,z_2)$ and $z_1=1/\mu, z_2=\lambda/\mu>0$. 
By the very definition of Birkhoff orthogonality, $x\perp_B y$ implies 
$\|(1,t)\|_X=\|x+ty\|_X\geq 1$ and $-x\perp_B z$ implies 
$\|(-1+tz_1/z_2,t)\|_X=\|-x+tz/z_2\|_X\geq 1$ for every $t\in\R$. It is clear 
that, moreover, $\|(\alpha,t)\|_X\geq \alpha$ and 
$\|(-\alpha+tz_1/z_2,t)\|_X\geq \alpha$ for every $\alpha>0$, so we have 
$$B_X\subset\{(\alpha,\beta)\in\R^2:\alpha\leq 1,\ 
\beta z_1/z_2\leq\alpha+1\}.$$
Furthermore, $B_X$ contains the convex hull of $\{(-1,0),(0,1),(1,0)\}$. 
On the one hand, this means that $\nX\leq\norma_1$. On the other hand, 
this implies that for each $t\in\,]0,1[$ the line $\{(\alpha,t):\alpha\in\R\}$ 
intersects with $S_X$ at exactly two points $u=(a^+(t),t),v=(a^-(t),t)$, with 
$$-1+tz_1/z_2\leq a^-(t)\leq -1+t,\quad 1-t\leq a^+(t)\leq 1.$$ 
Thus, we obtain 
$\|u-v\|_X=\|(a^+(t)-a^-(t),0)\|_X\leq 2-tz_1/z_2$ and 
$$\|u-x\|_X=\|(a^+(t)-1,t)\|_X\leq |a^+(t)-1|+|t|= 1-a^+(t)+t\leq 2t,$$
$$\|v+x\|_X=\|(a^-(t)+1,t)\|_X\leq |a^-(t)+1|+|t|= 1+a^-(t)+t\leq 2t.$$
The last three inequalities end the proof, taking $\e=2t$ and $\delta=z_1/(2z_2)$. 
\end{proof}

\begin{corollary}
Let $\XnX, \YnY$ be finite-dimensional normed spaces whose spheres are 
isometric. Then, $\nY$ is differentiable if and only if $\nX$ is also differentiable. 
\end{corollary}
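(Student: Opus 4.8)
The plan is to convert Proposition~\ref{Tlocal} into a transfer principle along the isometry. The only non-formal ingredient will be the classical fact, going back to Tingley~\cite{Tingley} (and elementary in finite dimensions), that a surjective isometry $\tauSXY$ preserves antipodes, i.e.\ $\tau(-x)=-\tau(x)$ for every $x\in S_X$; I would record this as a short preliminary lemma. Granting it, $\tau$ maps, for each fixed $x$, any metric ball of $S_X$ centred at $x$ onto the ball of the same radius of $S_Y$ centred at $\tau(x)$, and any ball centred at $-x$ onto the ball centred at $-\tau(x)$; the same holds for $\tau^{-1}$.

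With this in hand I would prove the pointwise equivalence: $\nX$ is differentiable at $x$ if and only if $\nY$ is differentiable at $\tau(x)$. Suppose $\nX$ is not differentiable at $x$. By Proposition~\ref{Tlocal} there are $\delta,\e_0>0$ such that for every $\e<\e_0$ one finds $u,v\in S_X\setminus\{\pm x\}$ with $\max\{\|u-x\|_X,\|v+x\|_X\}\le\e$ and $\|u-v\|_X\le 2-\delta\e$. Setting $u'=\tau(u)$ and $v'=\tau(v)$, the fact that $\tau$ is a bijective isometry with $\tau(-x)=-\tau(x)$ gives $\|u'-\tau(x)\|_Y=\|u-x\|_X\le\e$, $\|v'+\tau(x)\|_Y=\|v+x\|_X\le\e$, $\|u'-v'\|_Y=\|u-v\|_X\le 2-\delta\e$, and $u',v'\notin\{\pm\tau(x)\}$. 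Hence the metric condition of Proposition~\ref{Tlocal} holds in $S_Y$ at $\tau(x)$ with the very same $\delta$ and $\e_0$, so $\nY$ fails to be differentiable at $\tau(x)$. The reverse implication is the same argument applied to the isometry $\tau^{-1}$.

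To finish, since $\tau$ is onto, every point of $S_Y$ equals $\tau(x)$ for some $x\in S_X$, and the pointwise equivalence shows that $\nY$ is differentiable at every point of $S_Y$ precisely when $\nX$ is differentiable at every point of $S_X$; by Lemma~\ref{GatoBir} this is exactly the assertion that $\nX$, respectively $\nY$, is $C^1$-differentiable, which is what has to be proved. The main obstacle is the antipodal identity $\tau(-x)=-\tau(x)$: without it the pull-back only delivers, for $x=\tau^{-1}(y)$ and $x'=\tau^{-1}(-y)$, the relation $\|x-x'\|_X=2$, which in a space that is not strictly convex need not force $x'=-x$, so the pull-backs of the points near $-y$ would land near $x'$ rather than near $-x$ and Proposition~\ref{Tlocal} could not be invoked at $x$. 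I would therefore secure that identity first, either by quoting Tingley~\cite{Tingley} or by inserting its short finite-dimensional proof, after which the rest is routine bookkeeping with the triangle inequality.
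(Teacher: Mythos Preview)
Your proposal is correct and takes essentially the same approach as the paper: the paper's proof is the one-line remark that the result is straightforward from Proposition~\ref{Tlocal} together with Tingley's theorem that onto isometries between finite-dimensional spheres preserve antipodes, and you have simply written out that straightforward argument in detail. Your explicit discussion of why the antipodal identity is indispensable is a nice addition, but the underlying strategy is identical.
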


\begin{proof}
It is straightforward from Proposition~\ref{Tlocal} and the fact that every 
onto isometry between finite-dimensional spheres preserves antipodes 
(\cite[Theorem, p. 377]{Tingley}). 
\end{proof}

\begin{corollary}\label{Clocal}
Let $\XnX, \YnY$ be two-dimensional normed spaces and $\tauSXY$ a surjective 
isometry between their spheres. Then, either both spaces are piecewise 
$C^1$-differentiable or none of them is. Moreover, if $\nX$ is piecewise 
$C^1$ then it is $C^1$-differentiable at $x$ if and only if 
$\nY$ is $C^1$-differentiable at $\tau(x)$. 
\end{corollary}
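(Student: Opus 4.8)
The plan is to read the pointwise differentiability of $\nX$ off the metric of $S_X$ via Proposition~\ref{Tlocal}, transport it through $\tau$ using the fact that every surjective isometry between finite-dimensional spheres preserves antipodes (\cite[Theorem, p.~377]{Tingley}), and then reconcile the resulting pointwise statement with the global notion of being piecewise $C^1$. So, first I would fix $x\in S_X$ and prove that $\nX$ is non-differentiable at $x$ if and only if $\nY$ is non-differentiable at $\tau(x)$. Assume $\nX$ is non-differentiable at $x$ and take $\delta,\e_0>0$ as provided by Proposition~\ref{Tlocal}. For each $\e<\e_0$ choose $u,v\in S_X\setminus\{\pm x\}$ with $\max\{\|u-x\|_X,\|v+x\|_X\}\leq\e$ and $\|u-v\|_X\leq 2-\delta\e$. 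Since $\tau$ is a bijection with $\tau(-x)=-\tau(x)$, the points $\tau(u),\tau(v)$ lie in $S_Y\setminus\{\pm\tau(x)\}$ and satisfy $\|\tau(u)-\tau(x)\|_Y=\|u-x\|_X$, $\|\tau(v)+\tau(x)\|_Y=\|\tau(v)-\tau(-x)\|_Y=\|v+x\|_X$ and $\|\tau(u)-\tau(v)\|_Y=\|u-v\|_X$; hence the characterization of Proposition~\ref{Tlocal} holds for $\nY$ at $\tau(x)$ with the same constants, so $\nY$ is non-differentiable at $\tau(x)$. Applying the same reasoning to $\tau^{-1}$, which also preserves antipodes, yields the converse, so $\tau$ restricts to a bijection between the set of points of $S_X$ where $\nX$ is not differentiable and the analogous set for $\nY$.

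Next I would recall that for a convex Jordan curve the natural parameterization always has one-sided derivatives and is $C^1$ precisely off the set of corners, because away from a corner the monotone unit-tangent direction has no jump and is therefore continuous there (cf.\ the Remark on natural parameterizations in Section~1.1 and the last item of Lemma~\ref{GatoBir}). Hence such a curve is piecewise $C^1$ if and only if it has only finitely many corners, that is, if and only if the norm has only finitely many points of non-differentiability on its sphere. Since the first step gives a bijection between those two sets for $X$ and $Y$, one of them is finite exactly when the other is, and therefore $\nX$ is piecewise $C^1$ if and only if $\nY$ is, which is the first assertion.

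Finally, assume $\nX$ is piecewise $C^1$; then $\nY$ is too, and both spheres have only finitely many corners. For such a norm, differentiability at a point $x$ just means that $x$ is not a corner, and since the corners are finitely many this already forces $\nX$ to be $C^1$ on a neighbourhood of $x$ (and likewise for $\nY$ at $\tau(x)$). Combining this with the first step, $\nX$ is $C^1$-differentiable at $x$ if and only if $\tau(x)$ is not a corner of $S_Y$, that is, if and only if $\nY$ is $C^1$-differentiable at $\tau(x)$. The only step needing genuine care is the equivalence between ``piecewise $C^1$'' and ``finitely many non-differentiability points'' for convex curves, where the convexity (the monotonicity of the tangent direction) is used; the rest is a plain transfer along the isometry together with Proposition~\ref{Tlocal}.
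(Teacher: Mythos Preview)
Your argument is correct and matches the paper's intended approach: the paper gives no explicit proof of this corollary, leaving it as an immediate consequence of Proposition~\ref{Tlocal} together with Tingley's antipode-preservation theorem, which is precisely the mechanism you use for the pointwise transfer. The only addition you make is to spell out why, for a convex planar sphere, ``piecewise $C^1$'' is equivalent to having finitely many non-differentiability points; this is a detail the paper leaves implicit, and your justification via the monotonicity of the tangent direction is the right one.
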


\begin{remark}
To avoid confusion, we will use the notation $]\alpha,\beta[$ to denote 
the open interval whose endpoints are $\alpha$ and $\beta$. Thus, 
$(\alpha,\beta)$ will always be a vector in $\R^2$. 
\end{remark}

\begin{proposition}\label{Lejos}
Let $\nX$ be a strictly convex norm defined on $X=\R^2$. Consider $x,y,z\in S_X,$ 
and $\lambda\in\,]0,2[$ such that $z=y+\lambda x$ and $\nX$ is differentiable 
at $z$. In these conditions, $\nX$ is differentiable at $x$ if and only if 
$$G(t)=\|\gamma_z(t)-y\|_X$$
is differentiable at $t=0$. In particular, the differentiability at $x$ 
depends on the metric at $y$ and around $z$. 
\end{proposition}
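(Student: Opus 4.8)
The plan is to reduce the claim to a one-variable differentiability question at the point $x$. I will show that, because $\nX$ is differentiable at $z$, the function $G$ agrees up to an error of order $o(|t|)$ with the restriction of $\nX$ to a line through $x$; hence $G$ is differentiable at $0$ exactly when $\nX$ is differentiable at $x$ along that line, and strict convexity will guarantee the line is transverse enough for this to be equivalent to full differentiability of $\nX$ at $x$.

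First I would extract what differentiability of $\nX$ at $z$ gives. By Proposition~\ref{esH}, $z^\perp$ is a line, and by Lemma~\ref{Alonso} both one-sided derivatives $\gamma_{z,+}'(0),\gamma_{z,-}'(0)$ of the natural parameterization are unit vectors lying on that line; being the forward and backward tangent directions of a convex arc traversed anticlockwise, they cannot be opposite, so they coincide, say with $w$. Then $\gamma_z(t)=z+tw+r(t)$ with $\|r(t)\|_X=o(|t|)$, and since $z-y=\lambda x$ and $\nX$ is $1$-Lipschitz for itself,
\[
\bigl|\,G(t)-\|\lambda x+tw\|_X\,\bigr|\ \le\ \|r(t)\|_X\ =\ o(|t|).
\]
By positive homogeneity $\|\lambda x+tw\|_X=\lambda\,\|x+(t/\lambda)w\|_X$ (here $\lambda>0$ is used), so $G(t)=\lambda\,g(t/\lambda)+o(|t|)$ with $g(s):=\|x+sw\|_X$. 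As neither the $o(|t|)$ term nor the affine reparameterization by $\lambda>0$ affects differentiability at $0$, we get that \emph{$G$ is differentiable at $0$ if and only if $g$ is}, i.e. if and only if the two one-sided derivatives of $s\mapsto\|x+sw\|_X$ at $0$ agree.

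Next I would compare this with differentiability of $\nX$ at $x$. Let $V\subset\R^2$ be the set of directions $v$ for which $s\mapsto\|x+sv\|_X$ is differentiable at $s=0$. By convexity of $\nX$, $V$ is a linear subspace; it contains $x$, because $\|x+sx\|_X=|1+s|$ is differentiable at $0$; and, by Proposition~\ref{esH} together with Proposition~\ref{proprock} (which collapses the several notions of differentiability for the norm into one), $\nX$ is differentiable at $x$ precisely when $V=\R^2$. In dimension $2$ the only other possibility is $V=\span\{x\}$. Combining with the previous paragraph, $G$ is differentiable at $0$ if and only if $w\in V$, hence if and only if $\nX$ is differentiable at $x$ --- \emph{provided} $w\notin\span\{x\}$.

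So the one point that really needs an argument, and the one I expect to be the main obstacle, is to rule out $w\in\span\{x\}$, equivalently $z\not\perp_B x$; this is where strict convexity enters. Suppose $z\perp_B x$. Then the convex function $\mu\mapsto\|z+\mu x\|_X$ attains its minimum value $\|z\|_X=1$ at $\mu=0$; but it also equals $\|z-\lambda x\|_X=\|y\|_X=1$ at $\mu=-\lambda<0$, so it is constantly $1$ on all of $[-\lambda,0]$, which forces the non-degenerate segment from $y$ to $z$ (non-degenerate since $\lambda\neq0$) to lie inside $S_X$ --- impossible, as $\nX$ is strictly convex. This completes the equivalence, and the final assertion is then immediate: $\gamma_z$ is determined by arc length along $S_X$ near $z$ and $G$ records only distances from points of $S_X$ to $y$, so the differentiability of $\nX$ at $x$ has been characterised through the metric near $y$ and around $z$ alone.
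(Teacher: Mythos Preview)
Your argument is correct and is cleaner than the paper's. The paper proceeds by choosing the basis $\B=\{-\gamma'_{x,-}(0),x\}$, computing $G'_-(0)$ explicitly as the second coordinate $z'_2$ of $\gamma'_z(0)$ in that basis, then repeating the computation with the basis $\cl{\B}=\{-\gamma'_{x,+}(0),x\}$ to get $G'_+(0)=\cl{z}'_2$, and finally uses a linear-algebra argument to show $z'_2\neq\cl{z}'_2$ when the two one-sided tangents at $x$ differ. You instead reduce $G$ (up to $o(|t|)$) to the single convex one-variable function $g(s)=\|x+sw\|_X$ with $w=\gamma'_z(0)$, observe that the directions along which $\nX$ is two-sidedly differentiable at $x$ form a subspace $V\ni x$, and isolate the only genuine issue as $w\notin\span\{x\}$, which you dispatch with a short strict-convexity argument (if $z\perp_B x$ then the segment $[y,z]\subset S_X$). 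Your route makes the role of strict convexity completely transparent and avoids all coordinate bookkeeping; the paper's route, in exchange, produces the actual values of $G'_\pm(0)$. Two minor cosmetic points: the claim that $V$ is a subspace is the standard fact that the lineality space of the sublinear function $v\mapsto\nX'(x;v)$ is linear, and your sentence ``they cannot be opposite, so they coincide'' is exactly the pointwise version of the last item in Lemma~\ref{GatoBir}; you may wish to cite these rather than leave them implicit.
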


\begin{proof}
It is clear that if $\nX$ is differentiable at $x$ and $z$, then $G$ is 
differentiable at 0 because it is the composition of differentiable functions. 

Suppose, on the other hand, that $\nX$ is not differentiable at $x$, i.e., 
$\gamma_{x,+}'(0)\neq \gamma_{x,-}'(0)$. 
For the sake of clarity, we will consider the basis $\B=\{-\gamma_{x,-}'(0),x\} $
so the position of $x,y,z$ is like in Figure~\ref{nosuave}, i.e., 
$x=(0,1)$ and $z-y=(0,\lambda)$. 

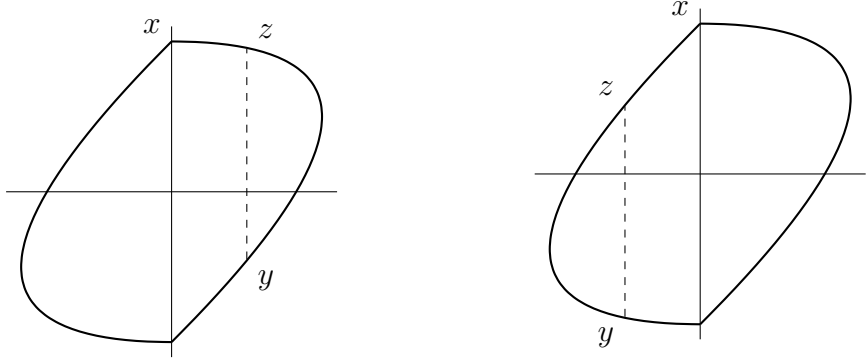
\begin{figure}[ht]
\begin{center}
\begin{tikzpicture}
  \draw[-] (-2.2, 0) -- (2.2, 0); 
  \draw[-] (0, -2.2) -- (0, 2.2) node[left] {$x$}; 
  \draw[dashed] (1.0, -0.90) node[below right] {$y$} -- (1.0, 1.90) node[above right] {$z$};
  \draw[scale=2.0, domain=-1.0:1.0, smooth, thick, variable=\y, black] plot ({\y*\y-1}, {\y+\y*\y/2-1/2});
  \draw[scale=2.0, domain=-1.0:1.0, smooth, thick, variable=\y, black] plot ({1-\y*\y}, {1/2-\y-\y*\y/2});
\end{tikzpicture}
\qquad
\qquad
\qquad
\begin{tikzpicture}
  \draw[-] (-2.2, 0) -- (2.2, 0); 
  \draw[-] (0, -2.2) -- (0, 2.2) node[left] {$x$}; 
  \draw[dashed] (-1.0, -1.90) node[below left] {$y$} -- (-1.0, 0.90) node[above left] {$z$};
  \draw[scale=2.0, domain=-1.0:1.0, smooth, thick, variable=\y, black] plot ({\y*\y-1}, {\y+\y*\y/2-1/2});
  \draw[scale=2.0, domain=-1.0:1.0, smooth, thick, variable=\y, black] plot ({1-\y*\y}, {1/2-\y-\y*\y/2});
\end{tikzpicture}
\caption{With the basis $\B$, $y$ and $z$ have the same first coordinate 
and $S_X$ arrives at $x$ horizontally.}\label{nosuave}
\end{center}
\end{figure}

Let us show that $G$ is not differentiable at 0. To this end, let $z_1'$ and 
$z'_2$ be the coordinates of $\gamma'_z(0)$ in the basis $\B$. With these 
assumptions, $z_1'$ is negative (as in the figure). 
As $G(0)=\lambda,$ we have 
$$G'_-(0)=\lim_{\e\to 0^-}\frac{\|\gamma_z(\e)-y\|_X-\lambda}{\e}\,\cdot$$
As $S_X$ is differentiable at $z$, 
$$\|\gamma_z(\e)-(z+\e\gamma'_z(0))\|_X=\O(\e),$$
so we have 
\begin{equation}
\begin{aligned}
G'_-(0)=&\lim_{\e\to 0^-}\frac{\|z+\e\gamma_z'(0)-y\|_X-\lambda}{\e}= 
\lim_{\e\to 0^-}\frac{\|\lambda x+\e\gamma_z'(0)\|_X-\lambda}{\e}= \\
&\lim_{\e\to 0^-}\frac{\|(0,\lambda)+\e(z'_1,z_2')\|_X-\lambda}{\e}=z'_2,
\end{aligned}
\end{equation}
where the last equality holds because $(0,\lambda)+\e(z'_1,z_2')$ lies in the 
first quadrant and is close to $(0,\lambda)$. In this situation, 
$$\|(0,\lambda)+\e(z'_1,z_2')\|_X=\|(0,\lambda+\e z_2')\|_X+\O(\e)=
\lambda+\e z'_2+\O(\e).$$
So, $G'_-(0)=z'_2$. 

The choice of the basis has nothing to do with the value of $G'_-(0)$. Had 
we computed $G'_+(0)$ by using the basis $\cl{\B}=\{-\gamma'_{x,+}(0),x\}$, 
we would have arrived at $G'_+(0)=\cl{z}'_2$, where $(\cl{z}'_1,\cl{z}'_2)$ 
are the coordinates of $\gamma_z'(0)$ with respect to $\cl{\B}$. What we need 
to see is that $z'_2\neq \cl{z}'_2.$ So, consider the linear automorphism of 
$\R^2$ given by $T(a,b)=(\cl{a},\cl{b})$ when 
$$a\gamma'_{x,-}(0)+bx=\cl{a}\gamma'_{x,+}(0)+\cl{b}x.$$
If we had $\cl{z}'_2=z_2$, then $T$ and the map 
$(a,b)\mapsto(\cl{z}_1'a/z_1,b)$ would agree at $(0,1)$ and $(z_1,z_2)$. 
Both maps are linear and these vectors form a basis, so they must be the same 
map. This readily implies that $S_X$ is differentiable at $x$, a contradiction 
that ends the proof. 
\end{proof}

\begin{remark}
Consider $\R^2$ endowed with the hexagonal norm $\nX$ defined as 
$$\|(a,b)\|_X=\left\{
\begin{array}{l l}
\max\{|a|,|b|\} & \text{ if } ab\geq 0, \\
|a|+|b|         & \text{ if } ab<0
\end{array}
\right.$$
The norm $\nX$ is not differentiable at $x=(0,1)$, but if we take $y=(1,1/3)$ 
and $z=(1,2/3)$ then $\|\gamma_z(t)-y\|_X=1/3+t$ for $t\in[-1/3,1/3]$, so 
$\|\gamma_z(t)-y\|_X$ is differentiable at $t=0$. 
So, if $\nX$ is not strictly convex then Proposition~\ref{Lejos} does not need 
to hold. 
\end{remark}

\begin{questions}
Can Proposition~\ref{Lejos} be generalised to finite-dimensional spaces or arbitrary 
dimension? \\
Is Proposition~\ref{Lejos} true if we replace differentiability by $C^2$-differentiability?
\end{questions}

\begin{nota}
As we will often need to refer to differentiability and non-differentiability 
points, for the sake of readability we will denote $\DD(C_X)$ (resp., $\ND(C_X)$) 
the sets where a curve $C_X$ is differentiable (resp., where it is not differentiable) 
so we will write $x\in \DD(C_X)$ (resp., $x\in \ND(C_X)$) instead of 
{\em $x$ is a differentiability} (resp., {\em non-differentiability}) {\em point of $C_X$.} 
\end{nota}

Before we proceed with our main results, we need these technical Lemmas: 

\begin{lemma}\label{distintos}
Let $C\subset\R^2$ be a Jordan curve that encloses a convex region and 
suppose that the extreme points are all different --that is, there are some 
$c^1=(c_1^1,c_2^1),$ $c^2=(c_1^2,c_2^2),$ $c^3=(c_1^3,c_2^3),$ $c^4=(c_1^4,c_2^4)\in C$ 
such that 
$$c_1^1=\min\{c_1:(c_1,c_2)\in C\}, 
c_2^2=\min\{c_2:(c_1,c_2)\in C\};$$ 
$$c_1^3=\max\{c_1:(c_1,c_2)\in C\}, 
c_2^4=\max\{c_2:(c_1,c_2)\in C\};$$ 
$$
c_1^1<\min\{c_1^2,c_1^3,c_1^4\},\ c_2^2<\min\{c_2^1,c_2^3,c_2^4\},\ 
c_1^3>\max\{c_1^1,c_1^2,c_1^4\},\ c_2^4>\max\{c_2^1,c_2^2,c_2^3\}. 
$$
Then, for any $x=(x_1,x_2)\in\R^2$ there are 
$u=(u_1,u_2),v=(v_1,v_2),w=(w_1,w_2)\in C,$ $t\neq 0$ such that 
$$u-v=tx,\quad w_1=u_1\quad\text{and}\quad w_2=v_2.$$
\end{lemma}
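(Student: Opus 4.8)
The plan is to reduce to the case where both coordinates of $x$ are strictly positive and then run a connectedness argument on the pencil of chords of the convex region in direction $x$.

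First the easy reductions. If $x=0$ any point of $C$ works; if $x=(x_1,0)$ with $x_1\neq 0$, take a horizontal chord of the convex region $\bar K$ bounded by $C$ at a height strictly between $c_2^2$ and $c_2^4$, let its endpoints be $v$ (left) and $u$ (right), and put $w:=u$; the case $x=(0,x_2)$ is symmetric with $w:=v$. Reflecting $C$ across either coordinate axis only permutes the four distinguished extreme points, hence preserves the hypothesis, and it transforms the conclusion equivariantly; so from now on we may assume $x_1,x_2>0$.

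Parametrize the lines in direction $x$ by $c(p)=x_2p_1-x_1p_2$, so that $\ell_c\cap\bar K\neq\emptyset$ exactly for $c$ in a compact interval $[c_-,c_+]$, and for $c\in{]c_-,c_+[}$ the intersection $\ell_c\cap\bar K=[v(c),u(c)]$ is a nondegenerate segment, $u(c)$ being the endpoint further along $+x$ and $v(c)$ the one further along $-x$; the maps $v,u$ are continuous on $[c_-,c_+]$, and $u(c)-v(c)=t(c)\,x$ with $t(c)>0$ precisely for $c\in{]c_-,c_+[}$. Set
$$w(c):=\bigl(u_1(c),\,v_2(c)\bigr),$$
the ``outer'' corner of the axis-parallel rectangle with diagonal $[v(c),u(c)]$; it is continuous in $c$. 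By construction $w(c)_1=u_1(c)$, $w(c)_2=v_2(c)$ and $u(c)-v(c)=t(c)x$, so as soon as $w(c^*)\in C$ for some $c^*\in{]c_-,c_+[}$ we are done with $(u,v,w,t)=(u(c^*),v(c^*),w(c^*),t(c^*))$. Since $\interior\bar K$ and $\R^2\setminus\bar K$ are disjoint open sets whose union is $\R^2\setminus C$ and ${]c_-,c_+[}$ is connected, it therefore suffices to exhibit $c',c''\in{]c_-,c_+[}$ with $w(c')\in\bar K$ and $w(c'')\notin\bar K$: either $w$ already meets $C$ at some interior parameter, or else $w({]c_-,c_+[})$ is a connected set disjoint from $C$ that meets both $\interior\bar K$ and $\R^2\setminus\bar K$, which is impossible.

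The heart of the matter is producing these two witnesses, and this is exactly where the four strict inequalities are used. For an outside witness one takes the chord through the rightmost point $c^3$: since $x$ strictly increases the first coordinate, $c^3$ is the $+x$-endpoint of its own chord, so the corner sits on the extreme-right vertical line of the bounding box, hence is either on the right part of $C$ — a direct solution — or strictly to the right of $\bar K$. For an inside witness one looks at chords near an extreme parameter, where the chord collapses onto the point $A:=v(c_-)=u(c_-)$; the strictness of $c_1^1<c_1^3$ together with the four extreme points being genuinely distinct forces the vertical chord of $\bar K$ through $A$ to be nondegenerate, and since $A$ lies on the upper boundary of $\bar K$ a short computation then shows that $w(c)$ slips strictly inside $\bar K$ for $c$ just past $c_-$. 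The obstacle I expect to absorb most of the write-up is the handful of degenerate configurations in which this breaks — namely when $\bar K$ has an edge or a vertex lying along a side of its bounding box (flat bottom, flat top, flat left or right edge, or a sharp corner at a box side): in each of these one or both witnesses (or $(u,v,w,t)$ itself) must be located by a separate, case-specific argument, again leaning on the strict inequalities and on the extra freedom a flat sub-arc of $C$ provides, since a whole sub-arc of $C$ is then available for one of the three points. Everything outside this case analysis is soft topology together with the four strict inequalities.
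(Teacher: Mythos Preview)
Your approach is sound but differs from the paper's, and the difference is worth noting. You parametrize by the \emph{chord} in direction $x$ and then argue that the corner point $w(c)=(u_1(c),v_2(c))$ must cross $C$ as $c$ varies, which forces you to manufacture an inside witness and an outside witness and then to wrestle with the degenerate configurations you anticipate. The paper takes the dual parametrization: it lets $w$ run along the south--east arc from $c^2$ to $c^3$, defines $u$ as the other intersection of $C$ with the vertical line through $w$ and $v$ as the other intersection with the horizontal line through $w$, and applies the intermediate value theorem directly to the ratio
\[
s\;\longmapsto\;\frac{w^s_1-v^s_1}{u^s_2-w^s_2}\;=\;\frac{u^s_1-v^s_1}{u^s_2-v^s_2},
\]
which is continuous on $]0,1[$ and has limits $0$ at $c^2$ and $+\infty$ at $c^3$, hence takes the value $x_1/x_2$ somewhere in between. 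Parametrizing by $w$ rather than by the chord makes the IVT endpoints transparent and collapses almost all of your case analysis: the only residual degeneracy is a segment at the bottom or the right of $C$, handled in one sentence by picking the correct endpoint of that segment. Your inside--witness argument, by contrast, is genuinely delicate --- for instance your claim that the vertical chord of $\bar K$ through $A$ is nondegenerate fails outright when $A=c^1$, even though the conclusion that $w(c)$ slips inside still appears to hold there; making this rigorous at corners of $C$ is exactly the write-up burden you foresaw. Both routes reach the goal, but the paper's choice of parameter buys a two-line IVT in place of a connectedness argument in $\R^2\setminus C$ plus a case split.
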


\begin{proof}
If $x=0$ then we may take any $u\in C$ and $w=v=u, t=1.$ So, we only need to 
show that the result holds for $x\neq 0$. It is clear that we may suppose 
$\|x\|_\infty=1$, so we will show that for every $x\in S_\infty$ there exist 
$t,u,v,w$ as in the statement. If $x=(1,0)$ we just need to find $u,v\in C$ that 
belong to the same horizontal line (in this case, $w=u$) 
and if $x=(0,1)$ it suffices to find $u,v\in C$ in the same vertical line and 
take $w=v$, so we may suppose $x_1x_2\neq 0$. 
Suppose that $x_1,x_2>0$, the other cases are analogous. 

As $C$ is convex, there is exactly one point or segment at the undermost end 
of $C$. Suppose it is just one point, say $w^0=c^2$, and analyse what happens 
when we move along the curve anticlockwise until we reach the rightmost point 
or segment of $C$, suppose again that it is a singleton, say $w^1=c^3$. 
As $w^1\neq w^0$, we may consider some anticlockwise 
parameterization of $C$ that has $\gamma(0)=w^0$ and $\gamma(1)=w^1$, we will 
denote $w^s=\gamma(s)$. 

It is clear that, for any $s\in\,]0,1[$, $w^s$ is the undermost point of the 
intersection of $C$ with the vertical line where it lies. Denote $u^s$ the 
uppermost point of this intersection. Analogously, $w^s$ is the rightmost 
intersection of $C$ with the horizontal line where it lies, we will denote its 
leftmost point as $v^s$. 
What we need to see is that for every proportion $x_1/x_2$ there is some $w^s$ 
such that $(w^s_1-v^s_1)/(u^s_2-w^s_2)=x_1/x_2$. 

But the map $s\in\,]0,1[\mapsto (w^s_1-v^s_1)/(u^s_2-w^s_2)$ is continuous and 
its limits are 0 at 0 and $\infty$ at 1. So, at some $s\in\,]0,1[$ 
we get the desired equality. 

If instead of one point there is a segment at the bottom of $C$ then we take 
$w^0$ as the leftmost point of this segment, if there is one segment at the 
rightmost end of $C$ then $w^1$ is at the top of the segment and everything 
goes undisturbed. 
\end{proof}

\begin{lemma}\label{pincho}
Consider $\R^2$ endowed with the norm $\|(\lambda,\mu)\|_1=|\lambda|+|\mu|$.
Let $C\subset\R^2$ be a convex Jordan curve that does not 
fulfil the conditions of Lemma~\ref{distintos} because some point, say $c$, is 
extreme in two directions. For each $a\in C$, consider the sequence 
$(a_n)_n\subset C$ defined as $a_1=a$ and, for $n\geq 1$, $a_{n+1}$ is the 
closest point from $c$ that shares some coordinate with $a_n$. In these  
conditions, $(a_n)_n\to c$ unless $a$ is the strict extreme in the two other 
directions, in which case $a_n=a,\ \forall\, n\in\N$. 
\end{lemma}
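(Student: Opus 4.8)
\textbf{Proof plan for Lemma~\ref{pincho}.}

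The plan is to set up a concrete coordinate picture around the doubly-extreme vertex $c$ and then argue that the recursion $a_n\mapsto a_{n+1}$ drives the relevant coordinates monotonically toward $c$, with the two coordinates of $c$ acting as barriers that cannot be overshot. First I would normalise: since $c$ is extreme in two of the four coordinate directions, up to relabelling and reflecting the axes I may assume $c$ is simultaneously the rightmost and the topmost point of $C$, i.e. $c=(c_1,c_2)$ with $c_1=\max\{p_1:p\in C\}$ and $c_2=\max\{p_2:p\in C\}$. Write $a_n=(x_n,y_n)$. By convexity and the fact that $c$ is the common right–top extreme, the ``closest point of $C$ sharing a coordinate with $a_n$'' alternates between a horizontal move and a vertical move: from a point $(x_n,y_n)$ with $x_n<c_1$, the other point of $C$ on the horizontal line $\{y=y_n\}$ is the one strictly closer to $c$ along that chord (the right-hand intersection), giving a strictly larger first coordinate, and symmetrically the vertical step strictly increases the second coordinate. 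I would make this precise by noting that for a convex Jordan curve and a horizontal line meeting its interior there are exactly two intersection points, and the one lying on the same side as $c$ is uniquely determined; strict convexity is not assumed here, so I must be careful that $a_n$ and $a_{n+1}$ are genuinely distinct unless $a_n$ already equals $c$, which is exactly where the hypothesis ``$a$ is the strict extreme in the two other directions'' enters — if $a$ is itself the strict leftmost-and-bottommost point, then there is no other point of $C$ on either coordinate line through $a$, so $a_{n+1}=a_n=a$ for all $n$, and the sequence is constant (and does not converge to $c$).

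In the generic case, the sequences $(x_n)$ and $(y_n)$ are nondecreasing and bounded above by $c_1$ and $c_2$ respectively, hence convergent, say $x_n\to \xi\le c_1$, $y_n\to\eta\le c_2$; along a subsequence $a_{n_k}\to(\xi,\eta)\in C$ (the curve is compact). The key step is to show $(\xi,\eta)=c$. Suppose not; then at the limit point $p=(\xi,\eta)$ at least one coordinate, say the first, satisfies $\xi<c_1$, so the horizontal line through $p$ meets $C$ at a second point $p'$ strictly to the right, at distance $\rho>0$ from $p$ in the first coordinate. By continuity of the ``right-hand intersection'' map on the relevant range of heights (which follows from convexity — the right boundary arc from the bottom extreme to $c$ is the graph of a concave, hence continuous, function of $y$), for $n$ large the point $a_n$ lies in a neighbourhood of $p$ on which the horizontal step increases the first coordinate by at least $\rho/2$, contradicting $x_{n+1}-x_n\to 0$. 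The symmetric argument handles the case $\eta<c_2$. Hence both coordinates of the limit equal those of $c$, i.e. $x_n\to c_1$ and $y_n\to c_2$, which is $(a_n)_n\to c$.

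The main obstacle I anticipate is purely technical rather than conceptual: handling the degeneracies caused by the absence of strict convexity. Concretely, a horizontal or vertical supporting segment of $C$ can make the ``right-hand intersection'' fail to depend continuously on the height in the naive sense, and it can make a single step move $a_n$ onto the far endpoint of such a segment; one must check that even then the step is either strictly productive (strictly increases the relevant coordinate) or already places $a_{n+1}$ at $c$. I would deal with this by working with the one-sided boundary arcs of $C$ — the arc from $c^2$ (bottom extreme) to $c$ is, after the coordinate normalisation, the graph $y\mapsto x=g(y)$ of a concave nondecreasing function on an interval ending at $c_2$, and likewise the arc from $c^1$ (left extreme) to $c$ is a graph $x\mapsto y=h(x)$ — and phrasing the recursion as alternately applying $g$ and $h$; concavity gives the required semicontinuity and the monotone-bounded convergence, and the only fixed point of the composite $g\circ h$ in the admissible range that is not $c$ is the corner $(c_1^1$-type, $c_2^2$-type) point, recovering exactly the stated exceptional case.
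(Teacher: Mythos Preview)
Your approach is correct and matches the paper's own proof: monotonicity of both coordinates plus compactness gives a limit, and that limit must be $c$. The paper's proof is extremely terse (three sentences, asserting ``the only possible limit is $c$'' without elaboration), whereas you actually justify this last step via the continuity/positive-step contradiction and carefully isolate the exceptional constant case; your version is more complete but not a different argument.
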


\begin{nota}
If there are two possible choices for a given $a_{n+1}$ then we choose 
the point lying in the same vertical line as $a_n$. 
\end{nota}

\begin{proof}
If the conditions in the statement are fulfilled, then it is clear that $(a_n)_n$ 
has some accumulation point because $C$ is compact and the accumulation point 
must be its limit because $(a_n)_n$ is monotonic in both coordinates. 
The only possible limit is $c$, so we are done. 
\end{proof}

\begin{theorem}\label{Piece}
Let $\XnX$ be a two-dimensional normed space whose unit sphere $S_X$ is 
piecewise $C^1$-differentiable and has at least two points $x\neq\pm \cl{x},$ 
with $x, \cl{x}\in \ND(S_X)$. For any normed plane $\YnY$, every isometry 
$\tauSXY$ is linear. 
\end{theorem}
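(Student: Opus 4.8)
\emph{Plan.} I would first dispose of the (a priori missing) surjectivity: $\tau$, being an isometry, is a continuous injection of the topological circle $S_X$ into the topological circle $S_Y$, so its image is a closed connected subset homeomorphic to $S^1$, hence all of $S_Y$; thus $\tau$ is a homeomorphism of circles, and after composing $\tau$ with a linear reflection of $Y$ (harmless for the conclusion) I may assume it is orientation preserving. By \cite[Theorem, p.~377]{Tingley} it preserves antipodes, and by Corollary~\ref{Clocal} the sphere $S_Y$ is again piecewise $C^1$ and $\tau$ maps $\ND(S_X)$ bijectively onto $\ND(S_Y)$; in particular $\tau(x),\tau(\cl x)\in\ND(S_Y)$ and, by injectivity together with antipode preservation, $\tau(x)\neq\pm\tau(\cl x)$, so both $\{x,\cl x\}$ and $\{\tau(x),\tau(\cl x)\}$ are bases of $\R^2$.

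\emph{Arc length.} Next I would show that $\tau$ preserves arc length: the $\nX$-length of an arc $A\subseteq S_X$ from $p$ to $q$ is the supremum of $\sum_i\|p_{i+1}-p_i\|_X$ over finite subsets $p=p_0,\dots,p_n=q$ of $A$ listed in the order induced by $A$; being an order-preserving homeomorphism, $\tau$ matches such subsets of $A$ with analogous subsets of $\tau(A)$, and being an isometry it preserves the sums, so (applying this to $\tau$ and to $\tau^{-1}$) the $\nY$-length of $\tau(A)$ equals the $\nX$-length of $A$. Hence $L:=L_X=L_Y$, the natural parametrizations satisfy $\tau\circ\gamma_p=\gamma_{\tau(p)}$ for every $p$, the non-differentiability points of $S_X$ and of $S_Y$ occur at the same parameters, and, writing $s_0\in\,]0,L[$ for the arc-length from $x$ to $\cl x$, one has $\cl x=\gamma_x(s_0)$ and $\tau(\cl x)=\gamma_{\tau(x)}(s_0)$.

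\emph{Adapted coordinates and the linear candidate.} Since $x$ is a corner, its normal cone in the dual is $2$-dimensional; picking a functional generically in its interior and one in the interior of the normal cone of $\cl x$ gives a basis of the dual (here one uses that the interiors of the normal cones at two distinct extreme points of a planar convex body, and at a point and the antipode of another, are disjoint), whose dual basis I take as coordinates on $X$. In these coordinates $x,-x,\cl x,-\cl x$ are the four distinct strict extreme points of $S_X$, so $S_X$ meets the hypotheses of Lemma~\ref{distintos}; performing the same construction on $Y$ from $\tau(x),\tau(\cl x)$ puts $S_Y$ in the same situation. Let $T\colon X\to Y$ be the linear isomorphism that is the identity in these two adapted coordinate systems. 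It then suffices to prove $T\circ\gamma_x=\gamma_{\tau(x)}$, i.e.\ that $\gamma_x(t)$ and $\gamma_{\tau(x)}(t)$ carry equal adapted coordinates for every $t$; since both $S_X$ and $S_Y$ are centrally symmetric about the origin (so that no nonzero translation can identify them), this reduces to showing that, for all $s,t$, the chords $\gamma_x(s)-\gamma_x(t)$ and $\gamma_{\tau(x)}(s)-\gamma_{\tau(x)}(t)$ carry equal adapted coordinates. Granting that, $\tau=T|_{S_X}$ is linear and $T(B_X)=B_Y$, so $X$ and $Y$ are linearly isometric.

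\emph{The reconstruction, and the main obstacle.} The remaining, and genuinely hard, step is this coordinate equality for chords. Given $s,t$, put $d=\gamma_x(s)-\gamma_x(t)$ and apply Lemma~\ref{distintos} to $S_X$ to obtain $u,v,w\in S_X$ and $r\neq0$ with $u-v=rd$, with $u,w$ on a line parallel to the second coordinate axis and $v,w$ on a line parallel to the first. The plan is then to argue, from the arc-length rigidity above and from the way the axes were fastened to the corners of $S_X$ and $S_Y$, that $\tau$ sends this triple to one of the same type in $S_Y$ (built from $\tau(x),\tau(\cl x)$), and to read off, using the length identities $\|\tau(u)-\tau(v)\|_Y=\|u-v\|_X$ and the strict extremality of the corner points, that $d$ and $\gamma_{\tau(x)}(s)-\gamma_{\tau(x)}(t)$ have the same coordinates; in the degenerate case in which the adapted coordinates force some point of $S_X$ or $S_Y$ to be extreme in two directions at once — so Lemma~\ref{distintos} does not apply — the auxiliary point $w$ is instead obtained as the limit of the approximating sequence of Lemma~\ref{pincho}. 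This is where the two technical lemmas are really needed, and the heart of the theorem is exactly the fact that the chord-length function together with the natural parametrization pins down the unit circle up to a linear isomorphism; everything before it is soft.
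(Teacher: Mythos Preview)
Your setup is sound --- surjectivity, antipode preservation, Corollary~\ref{Clocal}, and the arc-length argument are all correct and cleanly stated --- but the last paragraph, which you yourself flag as ``the genuinely hard step'', is where the proof actually lives, and there the proposal does not supply a mechanism. The paper's engine is Proposition~\ref{Lejos}: for \emph{strictly convex} $X$ (a reduction you omit; the paper invokes \cite[Corollary~3.8]{JCSRefJMAA} to dispose of the non--strictly-convex case, and without it Proposition~\ref{Lejos} fails), a chord $z-y$ points in a corner direction $x\in\ND(S_X)$ if and only if $t\mapsto\|\gamma_z(t)-y\|_X$ is non-differentiable at $0$ (when $z\in\DD(S_X)$). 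This is a purely metric statement in the natural parametrization, so $\tau$ preserves it; hence $\tau$ sends chords parallel to $x$ to chords parallel to \emph{some} $y\in\ND(S_Y)$. To pin $y$ down as $\tau(x)$, the paper runs an open--closed connectedness argument on the set $\C_x$ of points $u$ for which the image chord is parallel to $\tau(x)$. Your phrase ``arc-length rigidity \ldots and the way the axes were fastened to the corners'' does not stand in for either ingredient.

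There is also an incompatibility in your coordinate choice. By taking the dual basis to two functionals in the \emph{interiors} of the normal cones at $x$ and $\cl x$, you ensure that $x,\cl x$ become strict extreme points (so Lemma~\ref{distintos} applies) --- but then your coordinate axes $e_1,e_2$ are \emph{not} the corner directions $x,\cl x$, and there is no metric test for ``chord parallel to $e_i$''. The paper instead works in the basis $\{x,\cl x\}$ itself, so that ``axis-parallel chord'' is exactly ``corner-directed chord'' and Proposition~\ref{Lejos} applies; the price is that Lemma~\ref{distintos} may fail in those coordinates, which is why the paper carries the dichotomy with Lemma~\ref{pincho}. In short: keep your preliminary reductions, switch to the basis $\{x,\cl x\}$, invoke Proposition~\ref{Lejos} to detect corner-directed chords, and supply the clopen argument for $\C_x$ and $\C_{\cl x}$ before appealing to Lemmas~\ref{distintos}/\ref{pincho}.
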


\begin{proof}
If $X$ is not strictly convex then the result holds by~\cite[Corollary~3.8]{JCSRefJMAA}, 
so we may suppose that $\nX$ is strictly convex. 

Suppose there are linearly independent $x,\cl{x}\in \ND(S_X)$, consider the 
basis $\B_X=\{x,\cl{x}\}$, and let $\tauSXY$ be an onto isometry. The only point 
in $S_X$ at distance 2 from $x$ is $-x$, so it is obvious that $\tau(-x)=-\tau(x)$
and we obtain that $\tau(x)$ and 
$\tau(\cl{x})$ are linearly independent so we may consider the basis 
$\B_Y=\{\tau(x),\tau(\cl{x})\}$. 
Taking coordinates with respect to $\B_X$ and $\B_Y$, we have $x=(1,0)_X, 
\tau(x)=(1,0)_Y,$ so both $x$ and $-x$ lie on the same horizontal line and 
$\tau(x)$ and $\tau(-x)$ do, too. 
We are going to see that this happens to $\tau(u),\tau(v)\in S_Y$ for any couple 
$u=(u_1,u_2),v=(v_1,v_2)\in S_X$ such that $u_1>v_1$ and $v_2=u_2$, i.e., such 
that $u-v=\lambda x$ for some $\lambda>0$ --observe that, actually, 
$\lambda=\|u-v\|_X\in ]0,2]$. Let $^\perp x$ be the only point in $S_X$ such 
that $^\perp x\perp_B x$ and whose second coordinate is negative ($^\perp x$ 
is unique because $\nX$ is strictly convex, see \cite[Theorem 4.15]{alonso2012birkhoff}). 
We will denote as $C$ the (relative) interior of the arc of $S_X$ that joins 
$^\perp x$ with $-^\perp x$ and contains $x$, observe that 
$S_X=C\cup-C\cup\{\pm ^\perp x\}$. Consider 
$$\C_{x}=\{u\in C:\tau(u)-\tau(v)=\lambda\tau(x)\text{ if }u-v=\lambda x, 
\lambda\in ]0,2]\}$$
 
We are going to show that $\C_{x}=C$, so we will have the equivalence 
$$u-v=\lambda x \Leftrightarrow \tau(u)-\tau(v)=\lambda\tau(x).$$ 

If $(u_n)_n\to u\in C$ and $u_n\in\C_{x}$ for every $n\in\N$, then consider 
the corresponding sequences $(v_n)_n,(\lambda_n)_n$. We have, for every $n\in\N$, 
\begin{equation}\label{sucesiones}
u_n-v_n=\lambda_n x\text{\quad and\quad }\tau(u_n)-\tau(v_n)=\lambda_n\tau(x). 
\end{equation}
It is clear that both $(v_n)_n,(\lambda_n)_n$ must converge and that
$$u-v=\lambda x \quad \text{and} \quad \tau(u)-\tau(v)=\lambda\tau(x),$$
where $\lambda=\lim(\lambda_n),v=\lim(v_n)$. So, $u\in\C_{x}$ and
$\C_{x}$ is, therefore, closed in $C$.

Suppose now that $(u_n)_n\to u$ and $u\in\C_{x}$. Take $\lambda, v$ and 
$(\lambda_n)_n, (v_n)_n$ such that $u-v=\lambda x, u_n-v_n=\lambda_n x$. 
As there are finitely many points in $\ND(S_X)$, we may suppose that 
none of them is $u_n$ or $v_n$. In this situation, Proposition~\ref{Tlocal} 
implies that $S_Y$ is differentiable at $\tau(u_n)$ and so, Proposition~\ref{Lejos} 
implies that $\tau(u_n)-\tau(v_n)=\lambda_n y_n$, with $y_n\in \ND(S_Y)$. 
As $\ND(S_Y)$ is finite, there is some $y$ that appears infinitely many 
times in $(y_n)_n$, so passing to a subsequence we may suppose that 
$\tau(u_n)-\tau(v_n)=\lambda_n y$ for every $n\in\N$. Of course, 
$\lim(\tau(u_n))_n=\tau(u)$, $\lim(\tau(v_n))_n=\tau(v)$ and 
$\lim(\lambda_n)_n=\lambda$, so 
$$\lambda y=\lim(\tau(u_n)-\tau(v_n))_n=\tau(u)-\tau(v)=\lambda\tau(x),$$ 
we obtain that $y=\tau(x)$ and this means that $\C_{x}$ is open. 

We have seen that $\C_{x}$ is non-empty --because $x\in\C_{x}$--, 
closed and open, so the connectedness of $C$ shows that $\C_{x}=C$. 

So, $u-v=\lambda x$ implies $\tau(u)-\tau(v)=\lambda\tau(x)$. Of course, the same 
applies to $\cl{x}$, so what we actually have is that 
$u-v=\lambda x+\mu \cl{x}$ implies $\tau(u)-\tau(v)=\lambda\tau(x)+\mu\tau(\cl{x})$
whenever there exists $w\in S_X$ such that either $w=v+\mu \cl{x}=u-\lambda x$ 
or $w=v+\lambda x=u-\mu \cl{x}$. 

Now we have two options. If we are in the hypotheses of Lemma~\ref{distintos}, 
then $w$ exists for every possible direction, 
and from the fact that $\tau$ is an isometry, we get 
$$\|\lambda\tau(x)+\mu\tau(\cl{x})\|_Y=\|\lambda x+\mu \cl{x}\|_X.$$ 
As we have taken coordinates with respect to $\{x,\cl{x}\}$ and 
$\{\tau(x),\tau(\cl{x})\}$, we get $\|(\lambda,\mu)\|_Y=\|(\lambda,\mu)\|_X$. 
This means that in these coordinates we have $\nY=\nX$, 
so~\cite[Theorem~2.3]{JCSRefJMAA} implies that $\tau$ is linear. 

If we cannot apply Lemma~\ref{distintos}, then  there is some $c\in S_X$ that is 
strictly extremal in two different directions. So, we may apply Lemma~\ref{pincho} 
to show that, given any $a\neq\pm c\in S_X$, the 
sequence $(\tau(a_n))_n$ is the same as the sequence $((\tau(a))_n)_n$, i.e., the 
sequence originated in $\tau(a)$. This implies that for every $n\in\N$, and 
with the coordinates taken again with respect to $\{x,\cl{x}\}$ and 
$\{\tau(x),\tau(\cl{x})\}$ we have $\tau(a_n)-\tau(a)=a_n-a$. As $(a_n)_n \to c$, 
this means that for every $a\in C,$ we have $\tau(a)-\tau(c)=a-c$. From 
here we readily see that $\tau$ is linear in $S_X$ and this completes the proof. 
\end{proof}

\begin{remark}
If the only non-differentiability points in $S_X$ are $\pm x$, then it is clear 
from the previous proof that $u-v=\lambda x$ implies $\tau(u)-\tau(v)=\lambda\tau(x)$, 
but we have not been able to infer from here that $\tau$ must be linear. Taking any 
basis $\B_X=\{x,\cl{x}\}$ and considering $\B_y=\{\tau(x),\tau(\cl{x})\}$ we 
have, in coordinates, $\tau(\alpha,\beta)-\tau(\alpha',\beta)=(\alpha-\alpha',0)$ 
for every $(\alpha,\beta), (\alpha',\beta)\in S_X$ but we have not been able to 
deduce anything for points with different second coordinates. 
\end{remark}

\begin{theorem}[Mankiewicz Property]\label{coroultra}
Let $\XnX, \YnY$ be strictly convex normed planes such that both $\ND(S_X)$ and 
$\ND(S_Y)$ are finite and contain more than three points. 
Let $C_X\subset X$ be a piecewise $C^1$ Jordan curve that encloses a convex set.
If there is some isometry $C_X\to C_Y$ for some piecewise $C^1$ curve
$C_Y\subset Y$, then $\tau$ is affine and $X$ and $Y$ are isometric.
\end{theorem}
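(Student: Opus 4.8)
The plan is to mimic the proof of Theorem~\ref{Piece}, with the curve $C_X$ playing the role of the unit sphere. Since $\nX$ is strictly convex and $\ND(S_X)$ is symmetric (as $\nX$ is even) with more than three points, it contains two linearly independent directions $x,\cl x$; likewise for $Y$. Two preliminary facts underlie everything. First, being a bijective isometry, $\tau$ also preserves the induced length metrics, hence it carries the natural parameterization of $C_X$ onto that of $C_Y$ up to orientation and base point, and it preserves the property ``the curve is $C^1$ at this point'', which for a strictly convex norm is detected metrically by $\lim_{t\to0}\|\gamma_c(t)-\gamma_c(-t)\|_X/(2|t|)=1$. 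Second, the proof of Proposition~\ref{Lejos} uses neither $y,z\in S_X$ nor $\lambda<2$: it only uses that the direction $x$ lies in $S_X$ and that the curve is $C^1$ at $z$. So Proposition~\ref{Lejos} holds verbatim with $S_X$ replaced by any piecewise $C^1$ convex Jordan curve and with $y,z$ on that curve and $\lambda>0$ arbitrary.

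First I would prove the direction-preserving step: for each $d\in\{x,\cl x\}$ there is a fixed $d'\in\ND(S_Y)$ such that $u,v\in C_X$ and $u-v=\lambda d$ with $\lambda>0$ force $\tau(u)-\tau(v)=\lambda d'$. Let $A_d^+\subset C_X$ be the open arc of points $u$ admitting $v\in C_X$ with $u-v=\lambda d$, $\lambda>0$, and let $u\mapsto(v(u),\lambda(u))$ be the associated continuous map. For $u\in A_d^+$ with $C_X$ smooth at $u$, the curve version of Proposition~\ref{Lejos} in $X$ shows that $t\mapsto\|\gamma_u(t)-v(u)\|_X$ is not differentiable at $0$ (because $\nX$ is not differentiable at $d$); transporting this through $\tau$ and applying the same proposition in $Y$ (here $C_Y$ is smooth at $\tau(u)$) forces $d'_u:=(\tau(u)-\tau(v(u)))/\lambda(u)\in S_Y$ to lie in $\ND(S_Y)$. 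As $\ND(S_Y)$ is finite, the open--closed--connected argument of Theorem~\ref{Piece} --- using continuity of $u\mapsto(v(u),\lambda(u))$ to pass to limits and gluing across the finitely many corners of $C_X$, where $\lambda>0$ --- shows $d'_u$ is constant, and a further limiting argument extends the identity to all of $C_X$. Finally $x'\neq\pm\cl x'$, for otherwise the $\tau$-images of $u$, $v(u)$ and the $\cl x$-partner $\cl v(u)$ would be three distinct collinear points of the convex curve $C_Y$ for a generic $u$, which is impossible.

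Fix now coordinates $\B_X=\{x,\cl x\}$ and $\B_Y=\{x',\cl x'\}$, with orientations chosen so that the two direction-preserving identities read $u-v=(\lambda,0)_{\B_X}\Rightarrow\tau(u)-\tau(v)=(\lambda,0)_{\B_Y}$ and the analogue for the second coordinate. As in Theorem~\ref{Piece} there are two cases. If $C_X$ meets the hypotheses of Lemma~\ref{distintos}, then for every direction $(\lambda,\mu)$ that lemma produces $u,v,w\in C_X$ with $u-v=t(\lambda,\mu)_{\B_X}$ and $w=(u_1,v_2)_{\B_X}$; writing $u-v=(u-w)+(w-v)$ as a vertical plus a horizontal chord and applying the direction-preserving step twice gives $\tau(u)-\tau(v)=(u_1-v_1,u_2-v_2)_{\B_Y}$, hence $\|(\lambda,\mu)\|_Y=\|(\lambda,\mu)\|_X$; so $\nX$ and $\nY$ agree in these coordinates, and one still has to deduce that $\tau$ is affine. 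If instead some $c\in C_X$ is extreme in two directions, Lemma~\ref{pincho} gives, for each $a\in C_X$, a staircase $a=a_1,a_2,\dots\to c$ whose consecutive points are joined by horizontal or vertical chords; summing the direction-preserving identities and letting $n\to\infty$ yields $\tau(a)-\tau(c)=(a_1-c_1,a_2-c_2)_{\B_Y}$ for every $a\in C_X$, i.e.\ $\tau$ is affine. In either case, an affine isometry $\tau(u)=L(u)+c_0$ satisfies $\|L(u-v)\|_Y=\|u-v\|_X$ on the neighbourhood $C_X-C_X$ of $0$, so $L$ is a surjective linear isometry and $X$ and $Y$ are isometric.

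I expect the main obstacle to be the final step of the Lemma~\ref{distintos} case, namely deducing that $\tau$ is affine once $\nX$ and $\nY$ coincide in the chosen coordinates. In contrast with Theorem~\ref{Piece}, where one could invoke a linearity theorem for self-isometries of a sphere, here $\tau$ joins two \emph{a priori} distinct convex curves, there is no antipodal point at distance $2$, and the greedy staircase of Lemma~\ref{pincho} may be periodic and thus fail to connect $C_X$. I would try to overcome this by noting that, once the norms agree, $\tau$ becomes a chord-preserving isometry \emph{inside a single normed plane}: then $\tau$ must send the extreme points of $C_X$ to those of $C_Y$ and the ``upper'' and ``lower'' arcs to ``upper'' and ``lower'' arcs, and combining this with the horizontal and vertical chord identities (and, if needed, a third direction from $\ND(S_X)$) should force $\tau$ to be a translation. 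The remaining point requiring care is the possible presence of flat pieces of $C_X$ --- possible even though $X$ is strictly convex --- which should be handled by treating a segment of $C_X$ as a degenerate family of parallel chords.
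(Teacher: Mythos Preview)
Your outline matches the paper in its skeleton (curve version of Proposition~\ref{Lejos}, the open--closed--connected direction-preserving step, the dichotomy Lemma~\ref{distintos}/Lemma~\ref{pincho}) but deviates at two points, one fortunate and one genuinely incomplete.

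\textbf{Corner detection.} Your local criterion $\lim_{t\to0}\|\gamma_c(t)-\gamma_c(-t)\|_X/(2|t|)=1$, combined with the observation that a bijective isometry carries arc-length to arc-length, is a cleaner substitute for the paper's argument. The paper does \emph{not} use arc-length preservation; instead it studies, for each $a\in C_X$, the set $\NDif(a)=\{b\in C_X:\|\gamma_a(t)-b\|_X\text{ is not differentiable at }0\}$ and shows that this set is ``small'' (finitely many points plus at most one segment through $a$) when $a\in\DD(C_X)$ and ``large'' (all of $C_X$ minus finitely many points and at most two segments) when $a\in\ND(C_X)$. Your route is shorter and uses strict convexity of $\nX,\nY$ directly; the paper's route has the advantage that the same object $\NDif(a)$ is what feeds into the direction-preserving step, so no separate ingredient is needed.

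\textbf{The affine step in the Lemma~\ref{distintos} case.} You are right that this is the obstacle, and your sketch (``chord-preserving isometry inside a single plane'', ``use a third direction'', ``extreme points go to extreme points'') does not close the gap; a periodic or near-periodic staircase really can occur, and the chord identities alone do not obviously rigidify $\tau$. The paper handles it as follows. After identifying $\nX=\nY$ and hence $Y=X$, set $\E_a=\{b\in C_X:\tau(a)-\tau(b)=a-b\}$ and build the \emph{bi-infinite} staircase $(a^n)_{n\in\Z}$ (alternating horizontal/vertical partners in both directions); by the chord identities, $\{a^n:n\in\Z\}\subset\E_a$ and $\E_a$ is closed. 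Three short claims finish: (i) if among the differences $a^{i_k}-b^{j_k}$ one can realise each of the four quadrants, then $b\in\E_a$ --- because otherwise $\tau(b^n)=b^n+v$ for a fixed $v\neq0$, and strict convexity gives a half-plane (hence a whole quadrant) on which translation by $-v$ strictly increases the norm, contradicting that $\tau$ is an isometry; (ii) every staircase meets each of the four extremal arcs $SW,SE,NE,NW$, since otherwise it would be coordinatewise monotone and accumulate, forcing the Lemma~\ref{pincho} configuration; (iii) starting from the lowest point $S$ and its vertical partner $b$, any $c$ with second coordinate $\le b_2$ together with its horizontal partner produces differences in all four quadrants relative to $\{S,b\}$, so $\E_S$ contains one of $SW,SE$. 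Combining (ii) and (iii), $\E_S=C_X$, hence $\tau$ is a translation. This quadrant/half-plane argument is the missing idea in your proposal.
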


\begin{proof}
First of all, we hasten to remark that in strictly convex spaces, if three 
points $z^1, z^2, z^3$ fulfil $\|z^1-z^3\|=\|z^1-z^2\|+\|z^2-z^3\|$, then 
$z^2$ belongs to the segment whose endpoints are $z^1$ and $z^3$, we will 
denote this segment as $[z^1,z^3]$. With this, it is not hard to see that 
a curve $C$ encloses a convex region if and only if for every triple of 
collinear points $z^1, z^2, z^3\in C$, the curve $C$ contains the segment 
$[z^1,z^3]$. So, our hypotheses imply that $C_Y$ is convex, too. 

The first we need to show is that Corollary~\ref{Clocal} and Proposition~\ref{Lejos} 
still apply in this situation, i.e., that if $u^0,v^0\in C_X$ fulfil that 
$(u^0-v^0)/\|u^0-v^0\|_X\in \ND(S_X)$, then for every $u,v\in C_X$ we have 
the equivalence $u-v=\lambda (u^0-v^0)$ if and only if 
$\tau(u)-\tau(v)=\lambda(\tau(u^0)-\tau(v^0))$. 

For the equivalent of Corollary~\ref{Clocal}, we have to make do without $-x$, 
but the only thing really useful of having $-x$ was that $x\in \DD(S_X)$ if and 
only if $-x\in \DD(S_X)$. In any case, we are going to show that $x\in \DD(C_X)$ 
is equivalent to $\tau(x)\in \DD(C_Y)$. As the proof is going to be quite 
different, we will denote the point as $a$ instead of $x$. 

Let $a\in C_X$ and let us analyse the set 
$$\NDif(a)=\{b\in C_X:\|\gamma_a(t)-b\|_X\text{ is not differentiable at }t=0\}, $$
observe that one has $a\in\NDif(a)$ for every $a\in C_X$. 

If $a\in\DD(C_X)$, then it is clear that $a\neq b\in\NDif(a)$ implies 
$(a-b)/\|a-b\|_X\in \ND(S_X)$ no matter whether $b\in\DD(C_X)$ or not. 
As there are only finitely many points in 
$\ND(S_X)$, say $\ND(S_X)=\{x^1,\ldots,x^m\}$, when $b\in\NDif(a)$ one has 
$b-a=\|b-a\|_Xx^i$ for some $i\in\{1,\ldots,m\}$ --we are considering as 
unrelated points $x^i$ and $-x^i$. 

\begin{claim}
For any $a\in\DD(C_X)$, $\NDif(a)$ contains, at most, one segment and finitely 
many isolated points. If it contains one segment, then one of its endpoints is $a$. 
\end{claim}

\begin{proof}
We need to show that for every $i\in\{1,\ldots,m\}$, there is at most one point 
in $\NDif(a)$ that can be written as $b-a=\|b-a\|_Xx^i$ unless there is a 
segment that fulfils it. Indeed, if $b^1, b^2$ fulfil $b^1-a=\|b^1-a\|_Xx^i$ 
and $b^2-a=\|b^2-a\|_Xx^i$ with $\|b^1-a\|_X<\|b^2-a\|_X$, then $b^1$ lies in 
the interior of the segment $[a,b^2]$ --i.e., the closed segment whose endpoints 
are $a$ and $b^2$. As $a, b^1, b^2\in C_X$ and $C_X$ encloses a convex region, 
the segment $[a,b^2]$ is included in $C_X$ and it is obvious that there is only 
one segment included in $C_X$ that has $a$ as its endpoint --recall that $C_X$ 
is differentiable at $a$. If $a$ is interior to some segment, then no more 
segments can arrive to $a$ and it is clear that $\|\gamma_a(t)-b\|_X$ is 
differentiable at 0 for any point in the same segment. 
\end{proof}

If we have, instead, $a\in \ND(C_X),$ then $\NDif(a)$
includes every $b\in C_X$ such that $(a-b)/\|a-b\|_X\in \DD(S_X)$. Indeed, 
let $x=(a-b)/\|a-b\|_X\in \DD(S_X)$ and consider $\bar{x}$ as any of the two 
opposite vectors in $S_X$ such that $x\perp_B \bar{x}$ --i.e, 
$\bar{x}=\pm \gamma'_x(0)\in S_X$. With the basis $\B_X=\{\bar{x},x\}$, the 
sphere $S_X$ and the line $\{(\lambda,1):\lambda\in\R\}$ 
are tangent. This implies that, for every $\mu\in\,]\!-1,1[$, the line 
$\{(\lambda,\mu):\lambda\in\R\}$ meets $S_X$ in two points, say 
$b=(b_1,b_2), c=(c_1,c_2)$, and the first coordinates of these points 
have different sign. Moreover, as $\nX$ is strictly 
convex, $\{(\lambda,1):\lambda\in\R\}\cap S_X=\{x\}$. Both these facts will 
be important later. 

Denote $a'_-, a'_+$ the 
(different) side derivatives of $C_X$ in $a$. In the basis $\B_X=\{\bar{x},x\}$, we 
have $a'_-=(a'_{-,1},a'_{-,2}), a'_+=(a'_{+,1},a'_{+,2}).$ The speed of growing 
of $\|\gamma_a(t)-b\|_X$ as we are arriving at $a$ in the direction of $a'_-$ 
is $a'_{-,2}$ and the speed of growing in the direction of $a'_+$ is 
$a'_{+,2}$. This can be seen as in Proposition~\ref{Lejos} or by thinking 
this situation as if we had partial derivatives: $\|\gamma_a(t)-b\|_X$ would 
grow at speed 1 if $\gamma'_a(0)=x=(0,1)$ and the speed would be 0 if 
$\gamma'_a(0)=\bar{x}=(1,0)$. For any linear combination $(a_1,a_2)$ we have speed $a_2$. 
So, we need to show that $a'_{-,2}\neq a'_{+,2}$. As $C_X$ encloses a convex 
region, the signs of $a'_{-,1}$ and $a'_{+,1}$ are the same --maybe one of them 
is zero. So, if $a'_{-,2}= a'_{+,2}$, then we would have two points in $S_X$ with 
the same second coordinate in the same quadrant, but we have just seen that 
this cannot happen. 

This means that $\NDif(a)$ contains every point in $C_X$ but, at most, 
two segments that include $a$ and finitely many other points. 
In particular, there exist some open $U\subset C_X$ 
such that $U\subset\NDif(a)$ and $U\cup\{a\}$ is not contained in a metric segment. 

Gathering all these facts, we obtain that the metric structure of $\NDif(a)$ 
determines the differentiability of $C_X$ at $a$ --and this implies that 
$\tau(a)\in \DD(C_Y)$ if and only if $a\in \DD(C_X)$. 

As for the analogous of Proposition~\ref{Lejos}, we need to show that 
$(u-v)/\|u-v\|_X\in\ND(S_X)$ implies 
$(\tau(u)-\tau(v))/\|\tau(u)-\tau(v)\|_Y\in\ND(S_Y)$.

So, let $x=(u-v)/\|u-v\|_X\in\ND(S_X)$ and consider, as in the proof of 
Proposition~\ref{Lejos}, the basis $\B=\{-\gamma'_{x,-}(0),x\}$. We have
$u$ and $v$ in the same vertical line and $u$ is over $v$. 
In coordinates, $u_1=v_1, u_2>v_2$. 

Suppose that $u\in\DD(C_X)$ and that there is no segment in $C_X$ that contains 
$u$ and $v$. Then, the map 
$$G(t)=\|\gamma_u(t)-v\|_X$$ 
is not differentiable at $t=0$, the proof is the same as the one in 
Proposition~\ref{Lejos}. 

In the proof of Theorem~\ref{Piece}, we defined $C$ as the interior of one of 
the arcs that join $^\perp x$ and $-^\perp x$. The analogous way to define this 
is by taking $C$ as the interior of the arc that joins the lowermost point or 
segment in $C_X$ with its uppermost point or segment passing through the right 
part of $C_X$ --so, $C$ does not include any of its endpoints. 

Later, we defined
$$\C_{x}=\{u\in C:\tau(u)-\tau(v)=\lambda\tau(x)\text{ if }u-v=\lambda x, 
\lambda>0\}, $$
but now we do not have $\tau(x)$, so we need to modify the definition of the 
set $\C_x$. For this, there is an equivalent way to state $u-v=\lambda x$ and 
$\tau(u)-\tau(v)=\lambda\tau(x)$. 
We can take $u^0\in C,\ v^0\in C_X$ such that $u^0-v^0=\lambda_0 x$ with $\lambda_0>0$, 
eliminate the condition $u-v=\lambda x$ by writing $u-\lambda x$ instead of $v$ 
and define our new subset as any of the following equivalent ways: 
$$\C_{x}=\{u\in C:\tau(u)\!-\!\tau(u\!-\!\lambda x)=
\lambda(\tau(u^0)\!-\!\tau(v^0))/\lambda_0,\ \lambda>0\}, $$
$$\C_{x}=\{u\in C:\tau(u)\!-\!\tau(u\!-\!\lambda x)=
\lambda(\tau(u^0)\!-\!\tau(u^0\!-\!\lambda_0 x))/\lambda_0,\ \lambda>0\}. $$
Now, the proof of Theorem~\ref{Piece} shows that $\C_{x}$ is open and closed in $C$. 
Again, $\C_{x}$ is not empty because $u^0\in\C_x$, so $\C_x=C$. It is clear that 
if for every $u$ in $C_X$ such that there is exactly one $v\in C_X$ such that 
$u-v=\|u-v\|_Xx$ one has $\tau(u)-\tau(v)=\lambda(\tau(u^0)-\tau(v^0))$ for some 
$\lambda>0$, we have the same when $u$ belongs to a segment whose direction 
is $x$. So, denoting $y=(\tau(u^0)-\tau(v^0))/\|\tau(u^0)-\tau(v^0)\|_Y$ 
we have $u-v=\|u-v\|_Xx$ if and only if $\tau(u)-\tau(v)=\|\tau(u)-\tau(v)\|_Yy$. 

If we consider $\cl{x}\in \ND(S_X),\cl{x}\neq \pm x$ and 
$\cl{u}^0, \cl{v}^0\in C_X$ such that $\cl{u}^0-\cl{v}^0=\|\cl{u}^0-\cl{v}^0\|_X\cl{x}$, 
then the same argument as before shows that $\cl{u}-\cl{v}=\|\cl{u}-\cl{v}\|_X\cl{x}$
is equivalent to $\tau(\cl{u})-\tau(\cl{v})=\|\tau(\cl{u})-\tau(\cl{v})\|_Y\cl{y}$, 
with $\cl{y}=(\tau(\cl{u}^0)-\tau(\cl{v}^0))/\|\tau(\cl{u}^0)-\tau(\cl{v}^0)\|_Y$.

With this, if we consider the bases $\{x,\cl{x}\}$ and $\{y,\cl{y}\}$, we have 
the equivalences 
$$u-v=(\|u-v\|_X,0) \text{ if and only if } \tau(u)-\tau(v)=(\|u-v\|_X,0),$$
$$u-v=(0,\|u-v\|_X) \text{ if and only if } \tau(u)-\tau(v)=(0,\|u-v\|_X).$$

Now we have two options: if we can apply Lemma~\ref{pincho} then the remainder
of the proof goes as the last part of the proof of Theorem~\ref{Piece}.
Otherwise, we can apply Lemma~\ref{distintos} to obtain that, in the bases 
$\{x,\cl{x}\}$ and $\{y,\cl{y}\}$, we have $\nX=\nY$. It remains to show that 
$\tau$ is affine. 

Now, we may suppose that 
$Y=X$ and we need to show that $\tau(a)-\tau(b)=a-b$ for every $a,b\in C_X$. 

In what followswe suppose that $C_X$ has no horizontal nor vertical segment. 
An analogous idea gives a proof for the other cases. 
Observe that if $a$ and $b$ belong to the same horizontal or vertical segment, 
then we have $\tau(a)-\tau(b)=a-b$. We will denote by $W, S, E$ and $N$ 
respectively the leftmost, the undermost, rightmost and uppermost points in $C_X$. 
We will also denote as $SW, SE, NE$ and $NW$ the closed arcs that join each pair 
of consecutive extremal points. 

We will denote $\E_a=\{b\in C_X:\tau(a)-\tau(b)=a-b\}$. So, we need to show 
that $\E_a=C_X$ for some (every) $a\in C_X$. 

For any $a\in C_X$, denote $a^0=a;$ $a^1\in C_X$ is the other point that lies 
in the same horizontal line as $a^0$, $a^2\in C_X$ lies in the same vertical line 
as $a^1$ and so on. Furthermore, let $a^{-1}\in C_X$ be the point that lies 
in the same vertical line as $a^0$, $a^{-2}\in C_X$ is in the same 
horizontal line as $a^{-1}$\ldots This {\em bi-infinite sequence} may hit 
some extremal point and, so to say, get stuck --but this changes nothing. 
Moreover, if $C_X$ has some vertical or horizontal symmetry, then 
$a^4=a^0=a^{-4}$. Still, everything goes fine. It is clear that, given 
$n,m\in\Z$, one has $\tau(a^n)-\tau(a^m)=a^n-a^m$. The continuity of the 
isometry is enough to ensure 
$$\tau(\lim(a^{n_k}))-\tau(\lim(a^{m_k}))=\lim(a^{n_k})-\lim(a^{m_k})$$
for any convergent subsequences $(a^{n_k}), (a^{m_k})$. So, 
$\tau(c^1)-\tau(c^2)=c^1-c^2$ whenever $c^1,c^2\in\cl{\{a^n:n\in\Z\}}$, 
i.e., $\cl{\{a^n:n\in\Z\}}\subset \E_a$. 

To end the proof we need three more facts. 

\begin{claim}\label{4Q}
For $a,b\in C_X$, if there are $i_1,i_2,i_3,i_4,j_1,j_2,j_3,j_4\in\Z$ such that 
each $a^{i_k}-b^{j_k}$ lies in the $k$-th quadrant, then $b\in\E_a$ or, equivalently, 
$\E_b=\E_a$.
\end{claim}

\begin{proof}
We may suppose, after composition with some translation, that $\tau(a)=a$. If $\tau(a)-\tau(b)\neq a-b$, 
then there is some $v\neq 0$ such that $\tau(b^n)=b^n+v$ for every $n\in\Z$. 
As $\nX$ is strictly convex, there is a half-plane $H$ such that $\|u-v\|_X>\|u\|_X$ 
whenever $u\in H$. Namely, with $w=^\perp\!\! v$, 
$H=\{\alpha v+\beta w:\alpha\in\,]\!-\infty,0],\beta\in\R\}$. In particular, there is a whole 
quadrant included in $H$, so for some $k\in\{1,2,3,4\}$ one has 
$$\|\tau(a^{i_k})-\tau(b^{j_k})\|_X=\|a^{i_k}-b^{j_k}-v\|_X>\|a^{i_k}-b^{j_k}\|_X,$$ 
a contradiction with the fact that $\tau$ is an isometry. 
\end{proof}

\begin{claim}\label{SW}
For every $a\in C_X$, the intersection of ${\{a^n:n\in\Z\}}$ with every arc 
$SW, SE, NE$ and $NW$ is nonempty. 
\end{claim}

\begin{proof}
Let us see that $SW\cap {\{a^n:n\in\Z\}}\neq\emptyset$, the other cases follow by 
symmetry. As $C_X$ is convex, some point $b=(b_1,b_2)$ belongs to the arc $SW$ 
if and only if there is no point $c=(c_1,c_2)\in C_X$ such that $c_1\leq b_1, 
c_2\leq b_2$ and $c\neq b$. Given $a\in C_X\setminus SW$, either $a^1$ or 
$a^{-1}$ --maybe both-- has a coordinate that is smaller than that of $a$, say 
$a^1_1<a_1$. If $a^1\not\in SW$, then $a^2_2<a^1_2$ and so on. 
If there is no $n$ such that $a^n\in SW$, then the sequence has an accumulation 
point, but this accumulation point must be the limit of the sequence because 
the sequence is bounded and nonincreasing in both coordinates. So, we are in 
the conditions of Lemma~\ref{pincho}, a contradiction.
\end{proof}

\begin{claim}\label{S}
For $S=(S_1,S_2)$ --the undermost point of $C_X$-- we have either $SW\subset\E_S$ 
or $SE\subset\E_S$. 
\end{claim}

\begin{proof}
Let $b=(S_1,b_2)\in C_X$ be the other point with the same first coordinate as $S$. 
It is clear that $b\in\E_S$. Given $c^1=(c^1_1,c^1_2)\in C_X$ with $c^1_2<b_2$, take 
$c^2=(c^1_1,c^2_2)\in C_X$ with, say, $c^1_1<c^2_1$. We have $c^2\in\E_{c^1}$ and 
moreover $b-c^1, b-c^2, S-c^2, S-c^1$ are, respectively in the first, second, third 
and fourth quadrants. Claim~\ref{4Q} implies that $c^1, c^2\in\E_S$ so, for every 
$c=(c_1,c_2)\in C_X$ such that $c_2\leq b_2$ one has $c\in\E_S$. As $C_X$ encloses 
a convex region, $E_1<b_1<W_1$ implies $b_2\geq\min\{E_2,W_2\}$. Now we may suppose 
$E_2\leq b_2$, that implies $u_2\leq b_2$ for every $(u_1,u_2)\in SE$, so $SE\subset\E_S$. 
\end{proof}
Now we just need to use Claims~\ref{SW} and~\ref{S} to see that $\E_S=C_X$, so 
we have finished the proof. 
\end{proof}

\begin{remark}
After Theorem~\ref{Piece},~\cite[Theorem~1.5]{TBanakhC2} 
and~\cite[Corollary~3.8]{JCSRefJMAA}, the only possibility for the existence 
of a nonlinear isometry between two-dimensional spheres is that both of them 
are strictly convex and one of the following holds: 
\begin{itemize}
\item Both spheres are $C^1$-differentiable and at least one of them is not 
absolutely smooth. 
\item None of the spheres is piecewise differentiable, i.e., there are 
infinitely many points of non-differentiability in each sphere. 
\item Both spheres have exactly two points of non-differentiability, say, $x$ and $-x$. 
\end{itemize}
\end{remark}

\section*{Acknowledgements}
I would like to thank Professor Tarás Banakh and my colleagues Daniel Morales 
and José Navarro for some valuable discussions regarding the two-dimensional 
Tingley's Problem. 

It is a pleasure to thank Professor Javier Alonso for the Gift~\ref{Alonso}. 

I absolutely need to thank the anonymous referee for their fantastic reports. 
The work reads much better because of these reports and, in particular, 
the proofs of Proposition~\ref{Tlocal}, Proposition~\ref{Lejos} and 
Theorem~\ref{coroultra} owe this referee a great debt. 

Supported in part by Junta de Extremadura programs GR-15152 and IB-16056 and 
DGICYT projects MTM2016-76958-C2-1-P and PID2019-103961GB-C21 (Spain).

\section*{References}
\bibliographystyle{abbrv}

\addcontentsline{toc}{section}{Bibliography}
\bibliography{DifferentiabilityMUP}{}

\begin{thebibliography}{10}

\bibitem{alonso2012birkhoff}
J.~Alonso, H.~Martini, and S.~Wu.
\newblock On {B}irkhoff orthogonality and isosceles orthogonality in normed
  linear spaces.
\newblock {\em Aequationes mathematicae}, 83(1-2):153--189, 2012.

\bibitem{TBanakhC2}
T.~Banakh.
\newblock Any isometry between the spheres of absolutely smooth 2-dimensional
  {B}anach spaces is linear,
  \href{https://arxiv.org/abs/1911.03767}{https://arxiv.org/abs/1911.03767}.
\newblock {\em Preprint}, 2019.

\bibitem{BCFP18}
J.~Becerra-Guerrero, M.~Cueto-Avellaneda, F.~J. Fernández-Polo, and A.~M.
  Peralta.
\newblock On the extension of isometries between the unit spheres of a
  $\text{JBW}^{\ast }$ -triple and a {B}anach space.
\newblock {\em Journal of the Institute of Mathematics of Jussieu}, page
  1–27, 2019.

\bibitem{JCSRefJMAA}
J.~Cabello~Sánchez.
\newblock A reflection on {T}ingley's problem and some applications.
\newblock {\em Journal of Mathematical Analysis and Applications}, 476(2):319
  -- 336, 2019.

\bibitem{CP18}
M.~Cueto-Avellaneda and A.~M. Peralta.
\newblock The {M}azur-{U}lam property for commutative von {N}eumann algebras.
\newblock {\em Linear and Multilinear Algebra}, 68(2):337--362, 2020.

\bibitem{ding2013}
G.-G. Ding and J.-Z. Li.
\newblock Sharp corner points and isometric extension problem in {B}anach
  spaces.
\newblock {\em Journal of Mathematical Analysis and Applications}, 405(1):297
  -- 309, 2013.

\bibitem{fangwang2006}
X.~N. Fang and J.~H. Wang.
\newblock Extension of isometries between the unit spheres of normed space
  {$E$} and {C ($\Omega$)}.
\newblock {\em Acta Mathematica Sinica}, 22(6):1819--1824, 2006.

\bibitem{FPeralta2018vN}
F.~J. Fern\'andez-Polo and A.~Peralta.
\newblock On the extension of isometries between the unit spheres of von
  {N}eumann algebras.
\newblock {\em Journal of Mathematical Analysis and Applications},
  466(1):127--143, 2018.

\bibitem{PeraltaNG}
F.~J. Fernández-Polo, J.~J. Garcés, A.~M. Peralta, and I.~Villanueva.
\newblock {T}ingley's problem for spaces of trace class operators.
\newblock {\em Linear Algebra and its Applications}, 529:294 -- 323, 2017.

\bibitem{FPeralta2017}
F.~J. Fernández-Polo and A.~M. Peralta.
\newblock {T}ingley's problem through the facial structure of an atomic
  {JBW}$^*$-triple.
\newblock {\em Journal of Mathematical Analysis and Applications}, 455(1):750
  -- 760, 2017.

\bibitem{FPeralta2018Low}
F.~J. Fernández-Polo and A.~M. Peralta.
\newblock Low rank compact operators and {T}ingley's problem.
\newblock {\em Advances in Mathematics}, 338:1 -- 40, 2018.

\bibitem{james1947}
R.~C. James.
\newblock Orthogonality and linear functionals in normed linear spaces.
\newblock {\em Transactions of the American Mathematical Society},
  61(2):265--292, 1947.

\bibitem{KadetsMiguel}
V.~Kadets and M.~Martín.
\newblock Extension of isometries between unit spheres of finite-dimensional
  polyhedral {B}anach spaces.
\newblock {\em Journal of Mathematical Analysis and Applications}, 396(2):441
  -- 447, 2012.

\bibitem{li2016}
J.-Z. Li.
\newblock {M}azur–{U}lam property of the sum of two strictly convex {B}anach
  spaces.
\newblock {\em Bulletin of the Australian Mathematical Society},
  93(3):473–485, 2016.

\bibitem{Mankiewicz}
P.~Mankiewicz.
\newblock On extension of isometries in normed linear spaces.
\newblock {\em Bulletin de l'Académie Polonaise des Sciences, Série des
  Sciences Mathématiques, Astronomiques, et Physiques}, 20:367 -- 371, 1972.

\bibitem{MazurUlam}
S.~Mazur and S.~Ulam.
\newblock Sur les transformations isom\'etriques d'espaces vectoriels,
  norm\'es.
\newblock {\em Comptes rendus hebdomadaires des séances de l'Académie des
  sciences}, 194:946–948, 1932.

\bibitem{Mori18}
M.~Mori.
\newblock {T}ingley's problem through the facial structure of operator
  algebras.
\newblock {\em Journal of Mathematical Analysis and Applications},
  466(2):1281--1298, 2018.

\bibitem{MoriOzawa19}
M.~Mori and N.~Ozawa.
\newblock Mankiewicz’s theorem and the {M}azur--{U}lam property for
  {C}$^*$-algebras.
\newblock {\em Studia Mathematica}, 250(3):265--281, 2020.

\bibitem{Peralta2018Acta}
A.~M. Peralta.
\newblock A survey on {T}ingley's problem for operator algebras.
\newblock {\em Acta Sci. Math. (Szeged)}, 84(1-2):81--123, 2018.

\bibitem{Peralta2019RevMC}
A.~M. Peralta.
\newblock Extending surjective isometries defined on the unit sphere of
  $\ell_\infty({\Gamma})$.
\newblock {\em Rev. Mat. Complut.}, 32(1):99--114, 2019.

\bibitem{PeraTa2019}
A.~M. Peralta and R.~Tanaka.
\newblock A solution to {T}ingley's problem for isometries between the unit
  spheres of compact {C}*-algebras and {JB}*-triples.
\newblock {\em Science China Mathematics}, 62(3):553--568, 2019.

\bibitem{Rock}
R.~T. Rockafellar.
\newblock {\em Convex Analysis}.
\newblock Princeton University Press, 1970.

\bibitem{TanXiong}
D.~Tan and X.~Xiong.
\newblock A note on {T}ingley’s problem and {W}igner’s theorem in the unit
  sphere of $\mathcal{L}^\infty(\gamma)$-type spaces.
\newblock {\em Quaestiones Mathematicae}, 0(0):1--9, 2020.

\bibitem{tanaka2014}
R.~Tanaka.
\newblock A further property of spherical isometries.
\newblock {\em Bulletin of the Australian Mathematical Society},
  90(2):304–310, 2014.

\bibitem{Tanaka2016matr}
R.~Tanaka.
\newblock The solution of {T}ingley's problem for the operator norm unit sphere
  of complex $n\times n$ matrices.
\newblock {\em Linear Algebra and its Applications}, 494:274 -- 285, 2016.

\bibitem{Tanaka2017vN}
R.~Tanaka.
\newblock {T}ingley's problem on finite von {N}eumann algebras.
\newblock {\em Journal of Mathematical Analysis and Applications}, 451(1):319
  -- 326, 2017.

\bibitem{Tingley}
D.~Tingley.
\newblock Isometries of the unit sphere.
\newblock {\em Geometriae Dedicata}, 22(3):371--378, 1987.

\bibitem{wanghuang}
R.~Wang and X.~Huang.
\newblock The {M}azur-{U}lam property for two-dimensional somewhere-flat
  spaces.
\newblock {\em Linear Algebra and its Applications}, 562:55 -- 62, 2019.

\end{thebibliography}

\end{document}